\def\@seccntformat#1{\csname the#1\endcsname.\ } % the column after a section number
\def\@biblabel#1{#1.} % number style in the bibliography list
\date{}
\newif\ifNoRemark
\def\addtheorem#1#2#3#4{
\ifthenelse{\equal{#2}{}}{}%
{\ifthenelse{\expandafter\isundefined\csname the#2\endcsname}{\newcounter{#2}}{}}
\newenvironment{#1}[1][\global\NoRemarktrue]% No Remark by default
{\par\addvspace{2mm plus 0.5mm minus 0.2mm}\noindent % new paragraph without indent
{\bf #3}\ifthenelse{\equal{#2}{}}{}%
{\refstepcounter{#2}{\bf ~\csname the#2\endcsname}}%
{\bf \vphantom{##1}\ifNoRemark.\ \else\ (##1).\fi}\begingroup #4}%
   % if the optional parameter contains a comment then it will be printed in parenthesis;
   % to change this, replace (##1)
{\endgroup\par\addvspace{1mm plus 0.5mm minus 0.2mm}\global\NoRemarkfalse}
\expandafter\newcommand\csname b#1\endcsname{\begin{#1}}
\expandafter\newcommand\csname e#1\endcsname{\end{#1}}
}
\title{The number of the non-full-rank Steiner triple systems}
\author{Minjia Shi%
\thanks{Key Laboratory of Intelligent Computing and Signal Processing of Ministry of Education, School of Mathematics, Anhui Universit, Hefei, 230601, Anhui, China}\label{thx:2}
\and
Li Xu%
\footnotemark[2]
\and
Denis S. Krotov%
\thanks{Sobolev Institute of Mathematics, pr. Akademika Koptyuga 4, Novosibirsk 630090, Russia; e-mail: krotov@math.nsc.ru}%
}
\begin{document}
\maketitle
\begin{abstract}
The $p$-rank of a Steiner triple system $B$ is the dimension of the linear span of the set of characteristic vectors of blocks of $B$, over GF$(p)$. We derive a formula for the number of different Steiner triple systems of order $v$ and given $2$-rank $r_2$, $r_2<v$, and a formula for the number of Steiner triple systems of order $v$ and given $3$-rank $r_3$, $r_3<v-1$. 
Also, we prove that there are no Steiner triple systems of $2$-rank smaller than $v$ and, at the same time, $3$-rank smaller than $v-1$.
Our results extend previous work on enumerating Steiner triple systems
according to the rank of their codes, mainly by Tonchev, V.A.Zinoviev and
D.V.Zinoviev for the binary case and by Jungnickel and Tonchev for the
ternary case.
% The first formula generalizes the known formulas for the number of STS$(2^m-1)$ of $2$-rank $2^m-m$ (Tonchev, 2001), $2^m-m+1$ (D. Zinoviev and V. Zinoviev, 2013), and $2^m-m+2$ (D. Zinoviev, 2016). 
\end{abstract}

\section{Introduction}

A Steiner triple system STS$(v)$
is a finite set $S$ of cardinality $v$, whose elements are called points,
provided with a collection of $3$-subsets, called blocks, such that every
$2$-subset of $S$ is contained in one and only one block.
We assume that $S=\{1,\ldots,v\}$
and identify a block $b$ with its characteristic vector,
that is, the $v$-tuple of $0$s and $1$s having $1$s
in the coordinates numbered by the elements of $b$.
E.g., $(0,1,0,1,1,0,0) = \{2,4,5\}$ ($v=7$).
The dimension of the space over $\mathrm{GF}(p)$ spanned by the blocks
(to be exact, by their characteristic vectors)
of a Steiner triple system $T$
is called the $p$-rank of $T$.

A series of papers was devoted to the study of STS$(v)$ of deficient rank.
In 1978, Doyen, Hubaut, and Vandensavel \cite{DHV:1978} characterized the
classical examples --- the point-line designs in binary projective and ternary
affine spaces --- as the unique STS with these parameters which have minimal
$2$- and $3$-rank, respectively, and also proved that the $p$-rank of STS$(v)$, $v>3$, equals
 $v$ for every prime $p$ larger than $3$.
Later, Assmus in his fundamental paper \cite{Assmus:95}
gave an extensive theoretical study 
of the binary space spanned by the blocks
of a Steiner triple system.
A formula for the exact number of different STS$(2^m-1)$ of $2$-rank $2^m-m$,
which is one more than the minimum value, was derived by Tonchev \cite{Tonchev:2001:mass}.
V.\,Zinoviev and D.\,Zinoviev \cite{ZZ:2012:STS,ZZ:2013:rem}
proved the formulas for the number of different STS$(2^m-1)$ of rank $2^m-m+1$ 
 and for the number of such STS with prescribed linear span.
 Four years later, D.\,Zinoviev \cite{Zin:2016:NumSTS} found the corresponding formulas
 for the next value of the rank, $2^m-m+2$.
Recently, Jungnickel and Tonchev \cite{JunTon:18}
described the possible structures of the linear span of the blocks 
of a Steiner triple systems over $\mathrm{GF}(2)$ or $\mathrm{GF}(3)$,
in terms of parity check matrices.
In the subsequent paper \cite{JunTon:18+}, Jungnickel and Tonchev 
proved characterization theorems for Steiner triple systems 
whose linear span, binary or ternary, lies in a prescribed subspace; 
it occurs that such systems are in one-to-one correspondence with special collections of designs of smaller order 
(STS, transversal designs, $1$-factorizations), 
which yield a formula for the number of such systems, i.e., of different systems 
whose linear span is upperbounded by a prescribed subspace.
For the partial cases of STS$(2^m-1)$ of $2$-rank $2^m-m+1$ and $2^m-m+2$,
similar results were obtained earlier by V.\,Zinoviev and D.\,Zinoviev \cite{ZZ:2012:STS,ZZ:2013:struct}.
The goal of our paper is to make the next step for the general case 
and derive 
a formula for the number of all different STS$(v)$ whose rank coincides with a given value 
(less than $v$ for the $2$-rank  or less than $v-1$ for the $3$-rank),
but the linear span is not limited by any prescribed subspace.
% we derive a common formula, 
% generalizing the formulas in 
% \cite{Tonchev:2001:mass,ZZ:2013:rem,Zin:2016:NumSTS}.

Another very important direction in the enumeration of any kind of combinatorial
configurations is evaluating the number of nonisomorphic configurations,
and the Steiner triple systems of prescribed rank are not an exception.
Two STS on the same point set $S$ are \emph{isomorphic} if there is a permutation
of $S$ (an \emph{isomorphism}) that sends the blocks of one system to the blocks of the other.
Based on the exact formula on the number of different STS$(2^m-1)$ of $2$-rank $2^m-m$,
Tonchev \cite{Tonchev:2001:mass} derived an exponential lower bound on the number of nonisomorphic
systems with these parameters.
Recently, Jungnickel and Tonchev \cite{JunTon:18+} generalized that result
and obtained estimates
for the number of isomorphism classes of STS$(2^m w-1)$ of $2$-rank 
$2^m w - 1 -m$ and  STS$(3^m w)$ of $3$-rank $3^m w - m - 1$, for any positive integer $w$ and $m$
(formally, the statements are formulated for $w=2^t$ and $w=3^t$, respectively, 
but the theory developed works for an arbitrary $w$ as well).

There are several computational results related
with the calculation of the exact number 
of nonisomorphic Steiner triple systems of given $2$- or $3$-rank.
We summarize all known numbers in Table~\ref{t:r}.
There are $80$ nonisomorphic STS$(15)$; their $2$-ranks were studied 
by Tonchev and Weishaar \cite{TonWei:1997}.
Stinson and Seah \cite{StiSea:1986} calculated the number $284 457$ 
of isomorphism classes of STS$(19)$ 
that have a sub-STS of order $9$; these systems are exactly the  STS$(19)$ of $2$-rank $18$.
Kaski and {\"O}sterg{\aa}rd \cite{KO:STS19} classified all  nonisomorphic STS$(19)$;
their number is $11084874829$, and hence the number of STS$(19)$ of $2$-rank $19$ is also known.
Osuna \cite{Osuna:2006} found that there are $1239$ nonisomorphic STS$(31)$ of $2$-rank $27$.
Kaski, {\"O}sterg{\aa}rd,  Topalova,  and Zlatarski \cite{KOTZ:2008} counted the number $2166351$ 
of nonisomorphic STS$(21)$ of Wilson-type, which are essentially the STS$(21)$ of $3$-rank $19$.
Recently, Jungnickel, Magliveras, Tonchev, and Wassermann \cite{JMTW:STS27} calculated the number
of STS$(27)$ of $3$-rank $24$.

\begin{table}[t]
 \begin{tabular}{||@{\,}c@{\,}||@{\,}c@{\,}|@{\,}c@{\,}|@{\,}c@{\,}|@{\,}c@{\,}|@{\,}c@{\,}|@{\,}c@{\,}||@{\,}c@{\,}|@{\,}c@{\,}|@{\,}c@{\,}|@{\,}c@{\,}||}\hline
order & \multicolumn{10}{c||}{Number of nonisomorphic STS} \\ \hline
$v$   & \multicolumn{6}{c||}{of $2$-rank} & \multicolumn{4}{c||}{of $3$-rank } \\ \hline
     & $v$ & $v-1$ & $v-2$ & $v-3$ & $v-4$ & $v-5$ & $v-1$ & $v-2$ & $v-3$ & $v-4$ \\ \hline\hline
7 & 0 & 0 & 0 & 0 & 1                 & $-$ & 1 & $-$ & $-$ & $-$ \\ \hline
9 & 1 & $-$ & $-$ & $-$ & $-$                 & $-$ & 0 & 0 & 1  & $-$ \\ \hline
13 & 2  & $-$ & $-$ & $-$ & $-$               & $-$ & 2 & $-$ & $-$  & $-$ \\ \hline
15 & 57 & 16 & 5 & 1 & 1              & $-$ & 80 & $-$ & $-$  & $-$ \\ \hline
19 & 11084590372 & 284457 & $-$ & $-$ & $-$ & $-$ & 11084874829 & $-$ & $-$  & $-$ \\ \hline
21 & ? &  $-$ & $-$ & $-$ & $-$               & $-$ & ? & 2166351 & $-$  & $-$ \\ \hline
25 & ?  & $-$ & $-$ & $-$ & $-$               & $-$ & ? & $-$ & $-$  & $-$ \\ \hline
27 & ?  & ? & $-$ & $-$ & $-$               & $-$ & ? & ? & 2624  & 1 \\ \hline       
29 & ?  & $-$ & $-$ & $-$ & $-$               & $-$ & ? & $-$ & $-$  & $-$ \\ \hline
31 & ? &  ? & ? & ? & 1239 & 1 & ? & $-$ & $-$ & $-$                   \\ \hline
\end{tabular}
\caption{Known results on the number of isomorphism classes of STS of given rank}
\label{t:r}
\end{table}

In the current paper, we derive formulas for the number of STS$(v)$ of arbitrary $2$-rank smaller than $v$, see Theorem~\ref{th:N2},
or $3$-rank smaller than $v-1$ (note that the $3$-rank of STS cannot exceed $v-1$,
as it is always orthogonal to the all-one vector $(1,1,\ldots,1)$ over $\mathrm{GF}(3)$), see Theorem~\ref{th:N3}.
In particular, our result generalizes the formulas 
for $2$-rank $2^m-m$ \cite{Tonchev:2001:mass}, $2^m-m+1$ \cite{ZZ:2013:rem}, and $2^m-m+2$ \cite{Zin:2016:NumSTS}, obtained before.
The generalization is based on the M\"obius transform, which makes possible to derive a common formula for different ranks
and also to simplify some arguments.
The formulas are tight but conditional:
they involve the number of objects of smaller order
(Steiner triple systems, $1$-factorizations of complete graph, and latin squares).

For partial cases where these numbers are known, we obtain explicit values.
Namely, in addition to the results known before,
 we get the number of STS$(3^k)$ of $3$-rank $v-k$ and $v-k+1$,
 the number of STS$(7\cdot 3^k)$ of $3$-rank $7\cdot 3^k-k-1$,
 and the number of STS$(10\cdot 2^k-1)$ of $2$-rank $10\cdot 2^k-k-1$,
 for every $k$ (Corollaries~\ref{c:3concrete} and~\ref{c:2concrete}).
 In the other cases, our formulas can be combined
 with the asymptotic estimations of the number of
 Steiner triple systems \cite{Wilson:74:STS,LinLur:2013:STS},
 $1$-factorizations \cite{Cameron:parall,LinLur:2013:STS,Zin:2014:factor},
 and latin squares, see e.g. \cite[Theorems~17.2, 17.3]{vL-W:comb2001},
 or with some bounds on the number of different objects of the required order.
 For example, from the formulas derived in the current paper (Corollary~\ref{c:3concrete}), we know that the number
 of STS$(27)$ of $3$-rank at most $25$ is 
 $124363532158160295774288076917115534405632000000$,
which gives a lower bound on the number of all STS$(27)$
 (however, this bound is, hypothetically, 
 incredibly small against the number of STS$(27)$ of $3$-rank $26$).

We emphasize that the main difference of our result with the exact formulas of Jungnickel and Tonchev \cite{JunTon:18+} is that the formula of \cite{JunTon:18+} (concerning the $2$-rank or the $3$-rank)
counts the number of distinct STS contained a given linear subspace, 
while the rank of the counted STS is \emph{less than or equal} the dimension $d$ of the subspace.
Our main attempt is focused on excluding from this formula the systems of rank smaller than $d$
and counting only the STS whose linear span is \emph{exactly} the given subspace.
After that, multiplying by a simple factor, we obtain the total number of STS of rank $d$.
 
In the next section, we define necessary concepts and mention related facts.
In Section~\ref{s:3},
we describe the structure of an STS$(v)$ of prescribed $3$-rank $r_3$, $r_3<v-1$,
and of its dual space and derive a formula for the number of such systems.
In Section~\ref{s:2}, similar results are obtained for STS$(v)$ of given $2$-rank $r_2$, $r_2<v$.
In concluding Section~\ref{s:concl}, we show that there is no Steiner triple system 
which is both non-full-$2$-rank and non-full-$3$-rank, 
and briefly discuss the number of the isomorphism
classes of STS of prescribed rank.
%-------------------------------------------------------
%-------------------------------------------------------
\section{Definitions and notations}\label{s:preliminaries}

\paragraph{Orthogonality and duality.}\label{s:dual}
Two $v$-tuples $x=(x_1,\ldots,x_v)$ and $y=(y_1,\ldots,y_v)$
understood as vectors over GF$(q)$
are said to be \emph{orthogonal},
denoted $x\perp y$,
if $x_1y_1+\ldots + x_vy_v=0$.
Given a set $B$ of vectors, the \emph{dual space} $B^\perp$ is the set of all vectors orthogonal to each element of $B$.
By $\langle B \rangle$, we denote the linear span of the vector set $B$.

To simplify the formulas, we will use the standard notation of $q$-factorial $[n]_q!=\prod_{s=1}^n\sum_{i=0}^{s-1}q^i$.
Using this notation, the number $\prod_{t=1}^{n} (q^n-q^{t-1})$ of different bases in an $n$-dimensional space over GF$(q)$
can be written as $q^{\frac{n(n-1)}2}(q-1)^n[n]_q!$.

%-------------------------------------------------------
\paragraph{Latin squares.}\label{s:latin}
A \emph{latin square} of order $n$ is a function $f:\{1,\ldots,n\}^2\to \{1,\ldots,n\}$ invertible in each argument.
Traditionally, latin squares are represented by their value tables, whose rows and columns, by definition, contain all elements from $1$ to $n$.
(A system from the set $\{1,\ldots,n\}$ with a latin square operation $f$ is known as a \emph{quasigroup} of order $n$.)
A latin square $f$ is called \emph{symmetric} if $f(x,y)\equiv f(y,x)$ (i.e., the corresponding quasigroup is commutative).
A latin square $f$ is called \emph{totally symmetric} if $f(x,y)=z$ implies $f(y,x)=z$, $f(x,z)=y$, $f(z,x)=y$, $f(y,z)=x$, and $f(z,y)=x$.
A latin square $f$ is called \emph{idempotent} if $f(x,x)\equiv x$.
It is well known and obvious that the idempotent totally symmetric latin squares of order $n$ are in one-to-one correspondence with the Steiner triple systems of order $n$.
\begin{proposition}\label{p:STS-TSLS}
Let $S= \{1,\ldots,n\}$.
For every Steiner triple system $(S,B)$, the function $f$ defined as $f(x,x)\equiv x$ and $f(x,y)=z$ for every $\{x,y,z\}\in B$ is an idempotent totally symmetric latin square.
Conversely, every idempotent totally symmetric latin square $f:S^2\to S$ induces the  Steiner triple system $(S,B)$, $B=\{\{x,y,z\}:x\ne y, f(x,y)=z\}$.
\end{proposition}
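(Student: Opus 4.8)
The plan is to verify the two constructions directly against the defining axioms and then to check that they are mutually inverse; everything is elementary, so the work is essentially bookkeeping with the quasigroup and Steiner-system conditions.

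\textbf{From an STS to a latin square.} Starting from an STS $(S,B)$, I first observe that $f$ is well defined and total: $f(x,x)=x\in S$, and for $x\ne y$ the pair $\{x,y\}$ lies in exactly one block $\{x,y,z\}$, which supplies the unique value $f(x,y)=z\in S$. To see that $f$ is a latin square I fix $x$ and show that $y\mapsto f(x,y)$ is injective on $S$ (hence bijective, since $S$ is finite). Any block consists of three distinct points, so $f(x,y)\ne x$ for $y\ne x$, and the value $x$ is attained only at $y=x$; and if $f(x,y_1)=f(x,y_2)=z$ with $y_1,y_2\ne x$, then $\{x,y_1,z\}$ and $\{x,y_2,z\}$ are blocks through $\{x,z\}$, whence $y_1=y_2$. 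Invertibility in the first argument is symmetric. Idempotency is the definition $f(x,x)=x$. For total symmetry: if $f(x,y)=z$ with $x\ne y$, then $\{x,y,z\}\in B$ has three distinct elements, and all six required evaluations $f(y,x)=z$, $f(x,z)=y$, $f(z,x)=y$, $f(y,z)=x$, $f(z,y)=x$ just read off this same block; the degenerate case $x=y$ (so $z=x$) is immediate.

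\textbf{From a latin square to an STS.} Starting from an idempotent totally symmetric latin square $f$, I first check that each $\{x,y,z\}$ with $x\ne y$ and $f(x,y)=z$ is genuinely a $3$-subset: if $z=x$ then $f(x,y)=x=f(x,x)$, and invertibility in the second argument forces $y=x$, a contradiction; if $z=y$ then $f(x,y)=y=f(y,y)$, and invertibility in the first argument forces $x=y$, again a contradiction. For the Steiner property, given $x\ne y$ the triple $\{x,y,f(x,y)\}$ is a block containing $\{x,y\}$, so a block through $\{x,y\}$ exists. For uniqueness I use that, by total symmetry, whenever $\{a,b,c\}$ is a block all six ordered pairs of its distinct elements evaluate under $f$ to the remaining element; hence any block $\{x,y,z\}\in B$ satisfies $f(x,y)=z$, so if $\{x,y,z\},\{x,y,z'\}\in B$ then $z=f(x,y)=z'$.

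Finally, the two maps are mutually inverse: applying the second construction to the $f$ built from $(S,B)$ returns exactly $B$, since a $3$-set $\{x,y,z\}$ is a block precisely when $f(x,y)=z$; and applying the first construction to the $B$ built from $f$ returns $f$ on off-diagonal pairs (by the displayed Steiner-property argument) and on the diagonal by idempotency. There is no real obstacle here; the only point needing slight care is that in the second direction $B$ must be well defined as a collection of \emph{unordered} triples, which is exactly what total symmetry guarantees.
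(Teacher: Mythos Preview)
Your proof is correct and is the standard direct verification. The paper does not actually supply a proof of this proposition: it is introduced as ``well known and obvious'' and merely stated, so there is nothing to compare against beyond noting that your argument is exactly the routine check the authors had in mind.
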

Slightly less obvious but also well known is the following bijection.
\begin{proposition}\label{p:SLS-SLS0}
For every odd $n$, there is a one-to-one correspondence between symmetric latin squares of order $n$ and symmetric latin squares $f$ of order $n+1$ such that $f(x,x)\equiv n+1$.
\end{proposition}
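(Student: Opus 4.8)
The plan is to exhibit the correspondence by an explicit pair of mutually inverse maps and to verify that each one sends symmetric latin squares to symmetric latin squares; essentially all the content is concentrated in a single classical lemma about diagonals. Write $S=\{1,\dots,n\}$ and let $n+1$ be the extra symbol. Given a symmetric latin square $g$ of order $n$, I would keep its off-diagonal part, overwrite the whole diagonal with $n+1$, and store the old diagonal in the new last row and column: define $f$ of order $n+1$ by $f(x,y)=g(x,y)$ for $x\neq y$ in $S$, $f(x,x)=n+1$ for every $x$, and $f(x,n+1)=f(n+1,x)=g(x,x)$ for $x\in S$. In the other direction, given a symmetric latin square $f$ of order $n+1$ with $f(x,x)\equiv n+1$, delete the last row and column and restore the diagonal from that row: put $g(x,y)=f(x,y)$ for $x\neq y$ in $S$ and $g(x,x)=f(x,n+1)$. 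Neither map alters the off-diagonal entries on $S\times S$, and $f(x,n+1)$ is precisely what the forward map writes there, so $g\mapsto f\mapsto g$ and $f\mapsto g\mapsto f$ are both the identity as soon as we know the images are legitimate objects.

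The one substantive point, and the only place where oddness of $n$ is used, is the lemma that the diagonal $\big(g(1,1),\dots,g(n,n)\big)$ of a symmetric latin square of odd order is a permutation of $S$. I would prove it by counting: fix a symbol $s$; the $n$ cells carrying $s$ form a transpose-invariant set whose off-diagonal cells fall into two-element orbits $\{(x,y),(y,x)\}$, so the number of diagonal cells carrying $s$ has the same parity as $n$, hence is odd and in particular at least $1$; since there are only $n$ diagonal cells altogether, each symbol occurs on the diagonal exactly once.

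Granting the lemma, the forward direction is routine. For $x\in S$, the entries of row $x$ of $f$ in columns $y\in S\setminus\{x\}$ are the $g(x,y)$, which run over $S\setminus\{g(x,x)\}$; the entry in column $x$ is $n+1$; and the entry in column $n+1$ is $g(x,x)$; hence row $x$ of $f$ is all of $\{1,\dots,n+1\}$. Row $n+1$ consists of the diagonal values $g(x,x)$, $x\in S$, which by the lemma exhaust $S$, together with $n+1$. Symmetry of $g$ and of the construction gives the column conditions, so $f$ is a symmetric latin square with constant diagonal $n+1$. The reverse direction is the same computation read backwards: in row $x$ of $f$ the symbol $n+1$ occupies position $(x,x)$ and some $v_x\in S$ occupies position $(x,n+1)$, so the surviving off-diagonal entries of that row form $S\setminus\{v_x\}$, and setting $g(x,x)=v_x$ completes row $x$ of $g$ to all of $S$; columns follow by symmetry, and $x\mapsto v_x=f(x,n+1)$ is automatically a permutation, being part of a column of $f$. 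Combining the two directions with the already-noted fact that the maps are mutually inverse yields the stated bijection. I expect the only real obstacle to be the diagonal-is-a-permutation lemma; everything after it uses nothing beyond the definition of a latin square and the symmetry hypothesis.
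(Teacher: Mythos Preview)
Your proof is correct and follows essentially the same approach as the paper: the same explicit pair of maps (moving the diagonal to the last row/column and back), and the same key lemma that the diagonal of a symmetric latin square of odd order is a permutation, proved by the same parity count on the cells carrying a fixed symbol. You are a bit more thorough than the paper in checking that each row of the constructed squares is complete, but the content is identical.
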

\begin{proof}
Given symmetric latin square $f$ of order $n+1$ such that $f(x,x)\equiv n+1$,
the function $g:\{1,\ldots,n\}^2\to \{1,\ldots,n\}$ defined by
$g(x,x) = f(x,n+1)$, $g(x,y)=f(x,y)$ for every different $x$ and $y$ from $\{1,\ldots,n\}$ is straightforwardly a symmetric latin square.
$$
\begin{array}{|cccc|} \hline
 \mathbf 4 & 1 & 3 &  \mathbf 2\\
1 &  \mathbf 4 & 2 &  \mathbf 3\\
3 & 2 &  \mathbf 4 &  \mathbf 1\\
 \mathbf 2 &  \mathbf 3 &  \mathbf 1 &  \mathbf 4\\  \hline
\end{array}
\longleftrightarrow
\begin{array}{|ccc|} \hline
 \mathbf 2 & 1 & 3\\
1 &  \mathbf 3 & 2 \\
3 & 2 & \mathbf 1 \\ \hline
\end{array}
$$
To see the converse,
it is important to note that for a symmetric latin square $g$ of odd order $n$,
the set $\{g(x,x):x\in  \{1,\ldots,n\}\}$ coincides with  $\{1,\ldots,n\}$
(indeed, for every $x$ the number of the pairs $(y,z)$ such that $g(y,z)=x$ is $n$, which is odd,
while the number of the pairs $(y,z)$ such that $g(y,z)=x$ and $y\ne z$ is even, from the symmetry).
Then we define the required $f$ by the identities $f(x,x)= n+1$, $f(x,n+1)= f(n+1,x)= g(x,x)$,
and $f(x,y)=g(x,y)$ for every $x,y\in \{1,\ldots,n\}$, $x\ne y$.
\end{proof}
\begin{remark}
The symmetric latin squares $f$ of even order $n$ such that $f(x,x)\equiv n$
are in a straightforward one-to-one correspondence with the ordered $1$-factorizations of the complete graph on the vertex set $\{1,\ldots,n\}$
(i.e., the ordered partitions of the set of edges of this graph into $n-1$ sets of mutually disjoint edges; the number of the ordered partitions equals $(n-1)!$ times the number of unordered partitions, see~\cite{A000438}),
%; of cause such partitions exist only for even $n$),
with the tournament schedules for $n$ teams, see e.g. \cite{A036981},
and with the resolutions of the complete system of pairs from $\{1,\ldots,n\}$, see e.g. \cite{Assmus:95}.
Under these different names, but in the same context as in the current paper,
the symmetric latin squares can be mentioned in the literature on combinatorial designs.
\end{remark}

%-------------------------------------------------------
\paragraph{M\"obius coefficients.}\label{s:Mobius}
 For a prime power $q$,
define the numbers $\mu^{\scriptscriptstyle(q)}_{i}$, $i=0,1,2,\ldots$ by the following recursion:
$\mu^{\scriptscriptstyle(q)}_{0}=1$, and for an $i$-dimensional space $S$ over GF$(q)$, $i\ge 1$, and the set $\mathcal S$
of its subspaces,
\begin{equation}\label{eq:mob}
   \mu^{\scriptscriptstyle(q)}_{i}=-\sum_{C \in \mathcal S \backslash\{S\}} \mu^{\scriptscriptstyle(q)}_{\mathrm{dim}(C)},
    \qquad \mbox{or, equivalently,}\qquad
    \sum_{C \in \mathcal S} \mu^{\scriptscriptstyle(q)}_{\mathrm{dim}(C)}=0.
\end{equation}

\begin{remark}
The numbers $\mu^{\scriptscriptstyle(q)}_{i}$ are related with the so-called M\"obius function on the poset of spaces over GF$(q)$.
Namely, for two spaces $U$ and $V$,
the M\"obius function $\mu_U(V)$ equals $\mu^{\scriptscriptstyle(q)}_{\mathrm{dim}(U)-\mathrm{dim}(V)}$
if $V\subseteq U$ and $0$ otherwise.
\end{remark}

\begin{lemma}[{\cite[Sect.\,3.10]{Stanley}}]\label{l:Mob}
For every prime power $q$, one has
$\mu^{\scriptscriptstyle(q)}_{i}=(-1)^iq^{\binom{i}{2}}$.
\end{lemma}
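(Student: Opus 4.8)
The plan is to reduce the lemma to a classical $q$-binomial identity. First I would recall (or quickly prove by counting bases) that the number of $j$-dimensional subspaces of an $i$-dimensional space over $\mathrm{GF}(q)$ equals the Gaussian binomial coefficient $\binom{i}{j}_q := [i]_q!/\bigl([j]_q!\,[i-j]_q!\bigr)$. Grouping the subspaces in \eqref{eq:mob} by their dimension then turns the recursion into the scalar relation
$$\sum_{j=0}^{i}\binom{i}{j}_q\, \mu^{\scriptscriptstyle(q)}_j = 0 \qquad (i\ge 1),\qquad \mu^{\scriptscriptstyle(q)}_0=1 .$$
Since this relation determines the sequence $(\mu^{\scriptscriptstyle(q)}_i)_{i\ge 0}$ uniquely, it suffices to check that $m_i:=(-1)^i q^{\binom i2}$ satisfies the same relation, i.e. that
$$\sum_{j=0}^{i}(-1)^j q^{\binom j2}\binom{i}{j}_q = [\,i=0\,].$$

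Next I would establish this identity. The cleanest route is the $q$-binomial theorem $\prod_{k=0}^{i-1}(1+q^k t)=\sum_{j=0}^{i}q^{\binom j2}\binom{i}{j}_q t^j$: specializing $t=-1$ makes the left-hand side $\prod_{k=0}^{i-1}(1-q^k)$, which contains the factor $1-q^0=0$ whenever $i\ge 1$ and equals the empty product $1$ when $i=0$. If a self-contained argument is preferred, the identity follows by induction on $i$ from the two Pascal-type recurrences $\binom{i}{j}_q=\binom{i-1}{j-1}_q+q^{\,j}\binom{i-1}{j}_q$ and $\binom{i}{j}_q=q^{\,i-j}\binom{i-1}{j-1}_q+\binom{i-1}{j}_q$: splitting the alternating sum by the first recurrence and reindexing one of the resulting halves by the second lets the two pieces cancel.

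The only genuine content here is that $q$-binomial identity; the rest is bookkeeping. So the main obstacle — a mild one — is just presenting (or citing) the identity carefully, in particular checking that the weight $q^{\binom j2}$ coming from the values $m_j$ matches the weight $q^{\binom j2}$ appearing in the $q$-binomial theorem, so that the specialization $t=-1$ really produces the claimed cancellation. As a sanity check one can verify the first cases: $i=1$ gives $1-1=0$ and $i=2$ gives $1-(1+q)+q=0$, consistent with $\mu^{\scriptscriptstyle(q)}_0=1$, $\mu^{\scriptscriptstyle(q)}_1=-1$, $\mu^{\scriptscriptstyle(q)}_2=q$.
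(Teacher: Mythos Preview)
Your argument is correct. The paper does not actually prove this lemma; it simply quotes it from Stanley's \emph{Enumerative Combinatorics} and moves on. So there is no ``paper's own proof'' to compare against --- your write-up supplies exactly the standard argument behind the cited result.

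The route you chose (group subspaces by dimension to turn \eqref{eq:mob} into $\sum_{j=0}^{i}\binom{i}{j}_q \mu^{\scriptscriptstyle(q)}_j=[i=0]$, then verify the closed form via the $q$-binomial theorem at $t=-1$) is the canonical one and matches what one finds in Stanley. The only cosmetic point: the paper's definition contains an evident typo ($\mu^{\scriptscriptstyle(q)}_{i}=1$ should read $\mu^{\scriptscriptstyle(q)}_{0}=1$, and the recursion is meant for $i\ge 1$), which you have silently and correctly repaired.
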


%-------------------------------------------------------
%-------------------------------------------------------
\section{Non-full-3-rank STS}\label{s:3}
Let $v\equiv 1,3 \bmod 6$; that is, there exist STS$(v)$.
By $V^v$, we denote the vector space of all $v$-tuples over $\mathrm{GF}(3)$.
Denote by $\mathcal{D}$ the set of subspaces of $V^v$,
each including the all-one vector and being orthogonal to at least one STS$(v)$;
denote $$\mathcal{D}_i = \{D\in\mathcal{D} : \dim(D)=i+1 \}.$$
The following lemma can be considered as a treatment of the results of \cite[Sect.~4]{DHV:1978}
in terms of the structure of a basis for the dual space of STS.

\begin{lemma}[{\cite[Thm 5.1]{JunTon:18}}]\label{l:3i}
 Let $M$ be an $(i+1)\times v$ generator matrix for $D \in \mathcal{D}_i$,
 and let the first row of $M$ be the all-one vector.
 Then $M$ consists of $3^i$ different columns, each occurring $v/3^i$ times; e.g.,
$$
\left(\begin{array}{c@{\,}c@{\,}c@{\ }c@{\,}c@{\,}c@{\ }c@{\,}c@{\,}c@{\ }c@{\,}c@{\,}c@{\ }c@{\,}c@{\,}c@{\ }c@{\,}c@{\,}c@{\ }c@{\,}c@{\,}c@{\ }c@{\,}c@{\,}c@{\ }c@{\,}c@{\,}c@{\ }c@{\,}c@{\,}c}
     1&1&1&1&1&1&1&1&1&1&1&1&1&1&1&1&1&1&1&1&1&1&1&1&1&1&1  \\
     0&0&0&0&0&0&0&0&0&1&1&1&1&1&1&1&1&1&2&2&2&2&2&2&2&2&2  \\
     0&0&0&1&1&1&2&2&2&0&0&0&1&1&1&2&2&2&0&0&0&1&1&1&2&2&2
\end{array}\right).
$$
\end{lemma}
The proof given in \cite{JunTon:18} includes a proof of more deep mathematical result, a variation of Bonisoli's theorem. We give an independent simple proof.
\begin{proof}
Without loss of generality,
we can assume that the first column of $M$ is $(1,0,\ldots,0)^{\mathrm T}$
(we can achieve this by subtracting the first row from some of the others).

Claim (*). \emph{If $a$ and $b$ are columns of $M$, then $-a-b$ is also a column of $M$}.
Consider an STS$(v)$ orthogonal to the rows of $M$.
Let $a$ and $b$ be the $j^\text{th}$ and $k^\text{th}$ columns of $M$, and let $\{j,k,l\}$ be the STS block  containing $j$ and $k$.
Since all rows of $M$ are orthogonal to the characteristic vector of this block, the $l^\text{th}$ column $c$ satisfies $a+b+c=0$, i.e., $c=-a-b$. This proves (*).

Claim (**). \emph{If $c$ and $d$ are columns of $M$, then $c+d-(1,0,\ldots,0)^{\mathrm T}$ is also a column of $M$}. This is proved by applying (*) with $a=c$, $b=d$ first, and then with $a=-c-d$, $b=(1,0,\ldots,0)^{\mathrm T}$.

The last claim means that the set of columns of the matrix $M'$ obtained from $M$ by removing the first row is closed under addition. Since there are $i$ linearly independent columns, this set contains all possible $3^i$ columns of height $i$.

It remains to prove that different columns $a$ and $b$ occur the same number of times in $M$. Let $J$ and $K$ be the sets of positions in which $M$ has the columns $a$ and $b$, respectively. And let $l$ be a position of the column $-a-b$. For each $j$ from $J$, there is $k$ from $K$ such that $\{j,k,l\}$ is a block of the STS. Moreover, different $j$s correspond to different $k$s. This shows that $|J|\le |K|$. Similarly,  $|K|\le |J|$.
\end{proof}
% \begin{example} If $v=27$, then a generator matrix for a subspace from $\mathcal{D}_2$ has the following form, up to permutation of the columns:
% $$\left(\begin{array}{c@{\,}c@{\,}c@{\,}c@{\,}c@{\,}c@{\,}c@{\,}c@{\,}c@{\,}c@{\,}c@{\,}c@{\,}c@{\,}c@{\,}c@{\,}c@{\,}c@{\,}c@{\,}c@{\,}c@{\,}c@{\,}c@{\,}c@{\,}c@{\,}c@{\,}c@{\,}c@{\,}c@{\,}c@{\,}c@{\,}c}
%      1&1&1&1&1&1&1&1&1&1&1&1&1&1&1&1&1&1&1&1&1&1&1&1&1&1&1  \\
%      0&0&0&0&0&0&0&0&0&1&1&1&1&1&1&1&1&1&2&2&2&2&2&2&2&2&2  \\
%      0&0&0&1&1&1&2&2&2&0&0&0&1&1&1&2&2&2&0&0&0&1&1&1&2&2&2
% \end{array}\right).
% $$
% \end{example}

\begin{lemma}\label{l:Gamma}
Let $i$ and $j$ be nonnegative integers such that $i\le j$.
If $\mathcal{D}_{j}$ is not empty, then every subspace from $\mathcal{D}_i$
is contained in exactly $\Gamma_{v,i,j}$ subspaces from $\mathcal{D}_j$, where
\begin{equation}\label{eq:Gamma}
\Gamma_{v,i,j} =
 \left(\frac v{3^i} !\right)^{\!\!3^i} \Big/
% \prod_{k=1}^{i-j}(3^{i-j}-3^{k-1}) 3^{(j+1)(i-j)}\left(\frac v{3^i} !\right)^{3^{i}}.
3^{\frac{(j+i+1)(j-i)}2}
\left(\frac v{3^j} !\right)^{\!\!3^{j}}
 2^{j-i}[j-i]_3!.
\end{equation}
In particular, $|\mathcal{D}_j|= \Gamma_{v,0,j}$.
\end{lemma}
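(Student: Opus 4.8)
I would fix a subspace $D_i\in\mathcal D_i$ and count the $D_j\in\mathcal D_j$ with $D_i\subseteq D_j$ by "completing'' a generator matrix. Choose a generator matrix of $D_i$ whose first row is the all-one vector; by Lemma~\ref{l:3i} the remaining $i\times v$ part $M'$ has $3^i$ distinct columns, each occurring $v/3^i$ times, so the coordinates split into classes $C_x$ ($x\in\mathrm{GF}(3)^i$) of size $v/3^i$ according to the column of $M'$. Every $(j+1)$-dimensional $D_j\supseteq D_i$ has a generator matrix obtained by appending to the all-one row, $M'$ a $(j-i)\times v$ block $H$ whose rows lie in $D_j$ and generate $D_j/D_i$; for a fixed $D_j$ the number of such $H$ is the number $3^{\binom{j-i}{2}}2^{j-i}[j-i]_3!$ of ordered bases of the $(j-i)$-dimensional quotient times the number $(3^{i+1})^{j-i}$ of liftings, i.e. $\beta:=3^{(i+1)(j-i)+\binom{j-i}{2}}2^{j-i}[j-i]_3!$, and $(i+1)(j-i)+\binom{j-i}{2}=\tfrac{j-i}{2}\bigl(2(i+1)+(j-i-1)\bigr)=\tfrac{(j+i+1)(j-i)}{2}$. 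Hence the sought number equals $\tfrac1\beta$ times the number of "good'' blocks $H$, those for which the stacked matrix $M_j$ (all-one row, then $M'$, then $H$) generates a space from $\mathcal D_j$.

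The heart of the proof is to show that $H$ is good if and only if it is \emph{balanced on each class} $C_x$, meaning the restriction $H|_{C_x}\colon C_x\to\mathrm{GF}(3)^{j-i}$ takes every value exactly $v/3^j$ times. Necessity is immediate from Lemma~\ref{l:3i} applied to $M_j$: columns from distinct $C_x$ already differ in the $M'$-part, so the $3^j$ distinct columns of $M_j$ must be distributed $3^{j-i}$ per class, forcing $H|_{C_x}$ onto all of $\mathrm{GF}(3)^{j-i}$ with each value appearing $v/3^j$ times. For sufficiency, if $H$ is balanced then the distinct columns of $M_j$ are exactly $\{(1,x,z):x\in\mathrm{GF}(3)^i,\ z\in\mathrm{GF}(3)^{j-i}\}$, which span $\mathrm{GF}(3)^{j+1}$, so $\dim D_j=j+1$; it remains to exhibit an STS orthogonal to $D_j$. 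Letting $\psi(p)$ be the column of the $j\times v$ block below the all-one row at coordinate $p$, a block $\{a,b,c\}$ is orthogonal to $D_j$ exactly when $\psi(a)+\psi(b)+\psi(c)=0$ in $\mathrm{GF}(3)^j$, and $\psi$ is balanced with fibres $F_w:=\psi^{-1}(w)$ of size $u:=v/3^j$. The key elementary fact is that a triple of pairwise distinct vectors of $\mathrm{GF}(3)^j$ sums to $0$ iff it is a line of $\mathrm{AG}(j,3)$, while a zero-sum triple with a repeated entry is constant; so the required STS is built as a blow-up of $\mathrm{AG}(j,3)$: put an arbitrary STS on each fibre $F_w$ (here we use $u\equiv1,3\bmod6$) and, for each line $\{w_1,w_2,w_3\}$ of $\mathrm{AG}(j,3)$, glue $F_{w_1},F_{w_2},F_{w_3}$ via an arbitrary latin square of order $u$; a routine check shows each pair is covered once and each block has zero $\psi$-sum. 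The hypothesis $\mathcal D_j\neq\emptyset$ guarantees $u\equiv1,3\bmod6$: any $D\in\mathcal D_j$ together with an STS orthogonal to it gives, by the same block analysis, an STS on each of its $\psi$-fibres of size $v/3^j$.

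Finally, counting the good $H$ is now easy: independently over the $3^i$ classes, the number of balanced maps $C_x\to\mathrm{GF}(3)^{j-i}$ is the multinomial coefficient $(v/3^i)!\big/(u!)^{3^{j-i}}$, so the total is $\bigl((v/3^i)!\bigr)^{3^i}\big/(u!)^{3^j}=\bigl((v/3^i)!\bigr)^{3^i}\big/\bigl((v/3^j)!\bigr)^{3^j}$; dividing by $\beta$ gives exactly~\eqref{eq:Gamma}, independently of the choice of $D_i$. For the last claim, $\mathcal D_0$ is the single line $\langle\mathbf 1\rangle$ (the only $1$-dimensional space containing the all-one vector, and orthogonal to every STS since every block has weight $3$), so $|\mathcal D_j|=\Gamma_{v,0,j}$. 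I expect the only genuinely non-routine step to be the sufficiency half — that a balanced $H$ always produces a member of $\mathcal D_j$ — which relies on the affine-geometry description of zero-sum triples and the blow-up construction; everything else (the $\beta$-count, the multinomial count, and necessity) is bookkeeping built on Lemma~\ref{l:3i}.
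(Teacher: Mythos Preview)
Your argument is correct and follows the paper's skeleton: count the $(j-i)\times v$ blocks $H$ extending $M_i$ to a matrix with the right column profile, then divide by the number $\beta$ of such blocks per space (the paper's $B$; your computation of $\beta$ as ordered bases of the quotient times liftings agrees with the paper's direct row-by-row count). The one place you go beyond the paper is the sufficiency direction---that every balanced $H$ actually yields a member of $\mathcal D_j$---which the paper leaves implicit; you supply an explicit $\mathrm{AG}(j,3)$ blow-up (in effect anticipating Lemma~\ref{l:STS3Lat}), whereas a shorter route in the paper's spirit is simply that any two balanced matrices differ only by a coordinate permutation and $\mathcal D_j$ is closed under such permutations, so the hypothesis $\mathcal D_j\neq\emptyset$ already forces all of them in.
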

\begin{proof}
 Firstly, let us fix some $D_i$ from $\mathcal{D}_i$ and construct all $D_{j}$
 from $\mathcal{D}_j$ that satisfy $D_i\subseteq D_{j}$. %, $0\leq i\leq {j}\leq k$.
         Let ${M_i}$ be a generator matrix of $D_i$ whose first row contains only $1$s.
         According to Lemma~\ref{l:3i},  ${M_i}$ divides the coordinates into $3^i$ ``cells''
          such that each cell contains the same columns in it.
          Since $D_i\subseteq D_{j}$, so $D_{j}$ has a generator matrix ${M_{j}}$
          that starts with the $i+1$ rows of ${M_i}$,
          and this matrix subdivides the cells into $3^{j}$ ``subcells'' of the same size.
          The number of such subdivisions is
          $A={\left(\frac{v}{3^i}!\right)^{3^i}} \big/ {\left(\frac{v}{3^{j}}!\right)^{3^{j}}}.$

         Next, let us count the number of such matrices that generate the same code.
         Let ${M'_{j}}$ be another generator matrix of $D_{j}$ that also starts with $i+1$ rows of ${M_i}$;
         what is more, its $t^{\text{th}}$ row,  $i+2 \le t \le {j}+1$,
         is a linear combination of the rows of ${M_{j}}$,
         but is not a linear combination of the rows above
         it in the matrix ${M'_{j}}$.
         So, we can get there are $3^{{j}+1}-3^{i+1}$ kinds of values for the row $i+2$,
         $3^{{j}+1}-3^{i+2}$ kinds of values for the row $i+3$ and so on.
         Therefore, the number of such matrices $M'_{j}$ is $B=(3^{{j}+1}-3^{i+1})(3^{{j}+1}-3^{i+2})\ldots(3^{{j}+1}-3^{j})=3^{\frac{({j}-i)({j}+i+1)}{2}}\prod\limits_{s=1}^{{j}-i}(3^s-1).$
         Finally, % we can get
         the number of different $D_{j}$ that satisfy $D_{i}\subseteq D_j$ is $\Gamma_{v,i,j}=\frac{A}{B}$.
         %%=\frac{(m\cdot3^{k-i})!^{3^i}}{(m\cdot3^{k-{j}})!^{3^{j}}\cdot(3^{\frac{({j}-i)({j}+i+1)}{2}})\cdot\prod\limits_{s=1}^{{j}-i}(3^s-1)}$.
%Let $B$ be a generator matrix of a subspace from $\mathcal{D}_i$ with $1...1$ in the first row.
%$B$ has $3^{i}$ different columns, and the set of all $v$ coordinates is partitioned into $3^{i}$ corresponding equipotent cells.
%To complete $B$ to a matrix generating a subspace from $\mathcal{D}_j$,
%we have to divide each of these cells into $3^{j-i}$ equipotent cells.
%The number of such subpartitions is
%\begin{equation}\label{eq:ItoJ}
%  \left(\frac v{3^i} ! \bigg/
%\left(\frac v{3^j} !\right)^{3^{j-i}}\right)^{3^i}
%\end{equation}
%(here we considered ordered partitions, as every cell is marked by a column vector) and so, it is the number of different resulting matrices.
%Two such matrices generate the same subspace if and only if
%the last $j-i$ rows of one matrix are linear combinations of the rows of the other matrix. The number of such combinations is $\prod_{k=1}^{j-i}(3^{j-i}-3^{k-1})3^{(i+1)(j-i)}$, taking into account the linear independence of the rows of a generator matrix. Dividing (\ref{eq:ItoJ}) by this factor, we get (\ref{eq:Gamma}).
\end{proof}

\begin{lemma}[the structure of non-full-$3$-rank STS\cite{JunTon:18+}]\label{l:STS3Lat}
Given a subspace $D$ from $\mathcal{D}_j$,
the set of STS$(v)$ orthogonal to $D$ is in one-to-one correspondence
with the collections of $3^j$ Steiner triple systems of order $v/3^j$ and ${3^j(3^j-1)/6}$ latin squares of order $v/3^j$.
\end{lemma}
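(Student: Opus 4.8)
My plan is to convert orthogonality to $D$ into a combinatorial coloring condition on the points and then simply read off the claimed product structure. First I would fix a generator matrix $M$ of $D$ whose first row is the all-one vector, and let $M'$ be $M$ with that row removed. By Lemma~\ref{l:3i}, $M'$ has exactly $3^j$ distinct columns, each occurring $n:=v/3^j$ times; being $3^j$ distinct vectors of $\mathrm{GF}(3)^j$, they exhaust $\mathrm{GF}(3)^j$. Define $\pi\colon\{1,\dots,v\}\to\mathrm{GF}(3)^j$ by letting $\pi(c)$ be the $c$th column of $M'$, and write $C_g=\pi^{-1}(g)$ for the ``cell'' of $g\in\mathrm{GF}(3)^j$, so $|C_g|=n$. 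A block $\{x,y,z\}$ is orthogonal to $D$ iff it is orthogonal to every row of $M$: orthogonality to the first row is automatic ($1+1+1=0$ in $\mathrm{GF}(3)$), while orthogonality to the remaining rows says exactly $\pi(x)+\pi(y)+\pi(z)=0$. Hence an STS$(v)$ is orthogonal to $D$ iff each of its blocks satisfies this equation.

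Next I would classify the admissible blocks. If two of $\pi(x),\pi(y),\pi(z)$ coincide, the equation forces all three equal; and if $g_1\ne g_2$ then $g_3:=-g_1-g_2$ differs from both. So every admissible block either lies inside a single cell, or meets three pairwise distinct cells $g_1,g_2,g_3$ with $g_1+g_2+g_3=0$, i.e.\ a line of the affine geometry $\mathrm{AG}(j,3)$. From an orthogonal STS$(v)$ I would then extract: (i) for each $g$, the blocks contained in $C_g$, which form an STS$(n)$ on $C_g$ since the unique block through any pair inside $C_g$ is forced (by the case analysis) to lie inside $C_g$; and (ii) for each line $L=\{g_1,g_2,g_3\}$, with an ordering of its three points fixed once and for all, the function $\phi_L\colon C_{g_1}\times C_{g_2}\to C_{g_3}$ sending $(x,y)$ to the third point of the unique block through $\{x,y\}$. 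A short argument shows $\phi_L$ is a latin square: if $\phi_L(x,y)=\phi_L(x,y')$ then the blocks $\{x,y,z\}$ and $\{x,y',z\}$ share the pair $\{x,z\}$, hence coincide, so $y=y'$; thus $\phi_L$ is injective, hence bijective, in each argument. Since each of the $\binom{3^j}{2}$ pairs of $\mathrm{GF}(3)^j$ lies on exactly one line and each line carries three pairs, there are $3^j(3^j-1)/6$ lines, hence that many latin squares, and there are $3^j$ cells, hence $3^j$ Steiner triple systems.

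For the reverse map I would start from an arbitrary STS$(n)$ on each $C_g$ together with an arbitrary latin square $\phi_L\colon C_{g_1}\times C_{g_2}\to C_{g_3}$ for each line $L$, and form the block set $B$ consisting of all blocks of the cell systems together with all triples $\{x,y,\phi_L(x,y)\}$. The verification that $B$ is an STS$(v)$ orthogonal to $D$ is the step with the most bookkeeping, though each piece is routine: a pair inside a cell is covered once by that cell's system and by nothing else; a pair $\{x,y\}$ spanning two distinct cells $g_1,g_2$ lies on a unique line $L$, and the latin square property of $\phi_L$ supplies exactly one completing point in $C_{g_3}$; blocks arising from distinct cells or distinct lines cannot coincide because a block determines the set of cells it meets; and every block satisfies $\pi(x)+\pi(y)+\pi(z)=0$ by construction, so $B\perp D$. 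That the two maps are mutually inverse is then immediate. The only subtlety I would flag is the asymmetry in item~(ii): $\phi_L$ depends on which point of $L$ is declared ``last'', but any fixed convention gives the same count, since changing it merely replaces each $\phi_L$ by a conjugate latin square.
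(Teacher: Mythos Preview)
Your argument is correct and follows the same route as the paper's sketch: partition the points into $3^j$ cells via the columns of a generator matrix, observe that an orthogonal block either lies in one cell or meets three cells whose column labels sum to zero, and identify the within-cell blocks with an STS$(v/3^j)$ and the cross-cell blocks with a latin square per line of $\mathrm{AG}(j,3)$. You have simply filled in the details (the case analysis forcing the two block types, the latin-square verification, the inverse construction, and the ordering convention) that the paper leaves implicit.
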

\proof[Proof (a sketch)]
According to Lemma~\ref{l:3i}, a generator matrix $M$ of $D$ divides the coordinates
into $3^j$ \emph{groups} of size $v/3^j$. 
It can be seen that every STS$(v)$ orthogonal to $D$
is divided into the $3^j+{3^j(3^j-1)/6}$ following subsets:
\begin{itemize}
    \item For each of $3^j$ groups, 
    the triples with all $3$ points in these group form STS$(v/3^j)$.
    \item For every $3$ distinct groups $\{\alpha_1,\ldots,\alpha_{v/3^j}\}$,
     $\{\beta_1,\ldots,\beta_{v/3^j}\}$,  $\{\gamma_1,\ldots,\gamma_{v/3^j}\}$ corresponding to columns $a$, $b$, $c$ 
    with $a+b+c=0$,  
    the  triples with one point in each of these $3$ groups have the form $\{\alpha_x,\beta_y,\gamma_{f(x,y)}\}$ for some latin square $f$ of order~$v/3^j$. \qed
\end{itemize}
%\end{proof}

\begin{corollary}[{\cite{JunTon:18+}}]\label{c:STS3Lat}
Given a subspace $D$ from $\mathcal{D}_j$,
the number $\Phi(D)$ of STS$(v)$ orthogonal to $D$ equals $\Phi_{v,j}$,
where
$$
\Phi_{v,j}=\Psi_{v/3^j}^{3^j}
\Lambda_{v/3^j}^{3^j(3^j-1)/6},
$$
$\Psi_{u}$ is the number of STS$(u)$,
and $\Lambda_u$ is the number of latin squares of order $u$.
\end{corollary}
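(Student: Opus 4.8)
The plan is to read off the count directly from the bijection supplied by Lemma~\ref{l:STS3Lat}. That lemma identifies the STS$(v)$ orthogonal to $D$ with the collections consisting of $3^j$ Steiner triple systems of order $v/3^j$ (one for each of the $3^j$ groups into which a generator matrix of $D$ partitions the coordinates, cf.\ Lemma~\ref{l:3i}) together with one latin square of order $v/3^j$ for each triple of groups whose associated columns $a,b,c$ satisfy $a+b+c=0$. Since the correspondence is a bijection, it suffices to count such collections.

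First I would fix, once and for all, a labelling of the $3^j$ groups and, for each admissible triple of groups, an ordering of its members (the one implicit in Lemma~\ref{l:STS3Lat}, where the latin square $f$ maps $A\times B$ onto $C$). Relative to this labelling a collection is just an indexed family: for each group a Steiner triple system of order $v/3^j$, and for each admissible triple a latin square of order $v/3^j$. These choices are mutually independent and, by the bijection, every combination produces a valid and distinct STS$(v)$ orthogonal to $D$. Hence the number of collections is the product of the numbers of choices for the individual entries, namely $\Psi_{v/3^j}$ to the power (number of groups) times $\Lambda_{v/3^j}$ to the power (number of admissible triples).

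Next I would check that there are exactly $3^j(3^j-1)/6$ admissible triples. Writing the columns of a generator matrix of $D$ with all-one first row as $(1,w)^{\mathrm T}$, $w\in\mathrm{GF}(3)^j$, the relation $a+b+c=0$ becomes $u+v+w=0$ in $\mathrm{GF}(3)^j$ (the top coordinates always sum to $1+1+1=0$). There are $3^j(3^j-1)$ ordered triples $(u,v,w)$ of pairwise distinct vectors with $u+v+w=0$: choose $u$ freely, then $v\ne u$ freely, and $w=-u-v$ is forced and over $\mathrm{GF}(3)$ automatically differs from both $u$ and $v$; each unordered triple comes from $3!=6$ ordered ones, giving $3^j(3^j-1)/6$ (this is just the number of lines of $\mathrm{AG}(j,3)$). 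Substituting, the number of collections, and hence $\Phi(D)$, equals
\[
\Psi_{v/3^j}^{\,3^j}\,\Lambda_{v/3^j}^{\,3^j(3^j-1)/6}=\Phi_{v,j},
\]
which depends only on $v$ and $j$, justifying the notation $\Phi_{v,j}$.

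The only substantial input is Lemma~\ref{l:STS3Lat} itself: its proof (sketched above) must guarantee that the decomposition of an orthogonal STS$(v)$ into sub-systems and latin squares is a genuine bijection — in particular that any choice of component objects recombines into an STS$(v)$ still orthogonal to $D$. Granting that, the corollary is a pure product-rule count and presents no further obstacle.
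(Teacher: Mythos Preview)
Your proposal is correct and is exactly the intended derivation: the paper states the corollary as an immediate consequence of Lemma~\ref{l:STS3Lat} without giving a separate proof, and your product-rule count from the bijection (together with the verification that there are $3^j(3^j-1)/6$ admissible triples, a number already recorded in the lemma's statement) is precisely how one reads it off.
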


Now, given a subspace $D$ from $\mathcal{D}_j$, we know the number $\Phi(D)$ of STS that are orthogonal to some subspace of $D$.
To find the number of STS that are dual to $D$,
we should apply to the function $\Phi(D)$ the M\"obius transform on the poset of subspaces of $D$.
This is essentially done in the next lemma.

\begin{lemma}\label{l:ups}
Assume that $v$ is divided by $3^k$ and $k$
is the largest integer with this property.
Let $i \in \{0,\ldots, k\}$,
and let $D$ be in $\mathcal{D}_i$.
The number of STS$(v)$ with dual space $D$ equals $\Upsilon_{v,i}$, where
\begin{equation}\label{eq:ups}
 \Upsilon_{v,i} = \sum_{j=i}^k \Gamma_{v,i,j} \mu^{\scriptscriptstyle(3)}_{j-i}  \Phi_{v,j},
\end{equation}
where $\Gamma_{v,i,j}$ and $\Phi_{v,j}$
are from Lemma~\ref{l:Gamma} and Corollary~\ref{c:STS3Lat}.
\end{lemma}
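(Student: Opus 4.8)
The plan is to recover $\Upsilon_{v,i}$ by M\"obius inversion of the function $\Phi$ over the interval $[D,V^v]$ of the lattice of subspaces of $V^v$. For an arbitrary subspace $C$ of $V^v$, let $\Phi(C)$ denote the number of STS$(v)$ orthogonal to $C$ (this agrees with Corollary~\ref{c:STS3Lat} when $C\in\mathcal D_j$, where $\Phi(C)=\Phi_{v,j}$), and let $N(C)$ denote the number of STS$(v)$ whose dual space is exactly $C$; the goal is $N(D)=\Upsilon_{v,i}$. Since an STS$(v)$ is orthogonal to $C$ precisely when its dual space contains $C$, we have
\[
 \Phi(C)=\sum_{C'\supseteq C}N(C').
\]
Here $N(C')=0$ unless $C'\in\mathcal D$ (an STS with dual space $C'$ forces $C'$ to contain the all-one vector and to be orthogonal to that STS); and if $C'\supseteq D$ but $C'\notin\mathcal D$, then $C'$ still contains the all-one vector, this vector lying in $D\subseteq C'$, so $C'\notin\mathcal D$ can only be because no STS$(v)$ is orthogonal to $C'$, whence $\Phi(C')=0$.

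I would then restrict the displayed identity to the interval $[D,V^v]$ and invert it there. By the Remark following~\eqref{eq:mob} together with Lemma~\ref{l:Mob}, the M\"obius function of that interval is $\mu^{\scriptscriptstyle(3)}_{\dim C'-\dim D}$, so M\"obius inversion gives
\[
 N(D)=\sum_{C'\supseteq D}\mu^{\scriptscriptstyle(3)}_{\dim C'-\dim D}\,\Phi(C').
\]
By the previous paragraph only the subspaces $C'\in\mathcal D$ contribute. For $C'\in\mathcal D_j$ we have $\dim C'-\dim D=(j+1)-(i+1)=j-i$ and $\Phi(C')=\Phi_{v,j}$, and moreover $j\ge i$ (because $C'\supseteq D$) and $j\le k$ (because $\mathcal D_j\ne\varnothing$ forces $3^j\mid v$, by Lemma~\ref{l:3i}). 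Grouping by $j$ turns the sum into
\[
 N(D)=\sum_{j=i}^{k}\mu^{\scriptscriptstyle(3)}_{j-i}\,\Phi_{v,j}\,\bigl|\{C'\in\mathcal D_j:C'\supseteq D\}\bigr|.
\]

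It remains to evaluate the inner cardinality. When $\mathcal D_j\ne\varnothing$, Lemma~\ref{l:Gamma} gives $|\{C'\in\mathcal D_j:C'\supseteq D\}|=\Gamma_{v,i,j}$. When $\mathcal D_j=\varnothing$ (still with $i\le j\le k$), the cardinality is $0$; but by Lemma~\ref{l:STS3Lat}, once $3^j\mid v$ the only obstruction to $\mathcal D_j\ne\varnothing$ is the nonexistence of STS$(v/3^j)$, so $\Psi_{v/3^j}=0$ and hence $\Phi_{v,j}=0$, which makes the $j$-th term equal to $\Gamma_{v,i,j}\,\mu^{\scriptscriptstyle(3)}_{j-i}\,\Phi_{v,j}$ also in that case. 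Therefore $N(D)=\sum_{j=i}^{k}\Gamma_{v,i,j}\,\mu^{\scriptscriptstyle(3)}_{j-i}\,\Phi_{v,j}=\Upsilon_{v,i}$.

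I expect the only genuinely delicate point to be the set-up of the M\"obius inversion: choosing the right poset (the interval $[D,V^v]$ of the subspace lattice, with the M\"obius function recorded after~\eqref{eq:mob}) rather than, say, $\mathcal D$ itself, and keeping track of which superspaces of $D$ carry a nonzero $\Phi$. The case analysis for an empty $\mathcal D_j$ — which is what lets the answer be written with the closed form $\Gamma_{v,i,j}$ rather than a sum over the elements of $\mathcal D_j$ — is routine once that framework is fixed; the rest is a careful alignment of the definitions of $\Phi$, $\mathcal D$, and the dual space of an STS.
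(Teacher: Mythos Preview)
Your proof is correct and follows essentially the same approach as the paper: both arguments are M\"obius inversion on the interval $[D,V^v]$ of the subspace lattice, applied to the identity $\Phi(C)=\sum_{C'\supseteq C}N(C')$. The only cosmetic difference is direction---you invoke M\"obius inversion as a theorem and then collapse the sum to the closed form, whereas the paper starts from the closed form $\sum_j\Gamma_{v,i,j}\mu^{(3)}_{j-i}\Phi_{v,j}$, expands $\Gamma$ and $\Phi$ back into sums over $D'$ and $B$, and verifies the M\"obius identity \eqref{eq:mob} by hand. Your treatment of the edge case $\mathcal D_j=\varnothing$ (forcing $\Psi_{v/3^j}=0$ and hence $\Phi_{v,j}=0$) is actually more explicit than the paper's, which silently relies on the same fact when replacing $\Gamma_{v,i,j}$ by a sum over $D'\in\mathcal D_j$.
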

\begin{proof}
Utilizing the definition of $\Gamma_{v,i,j}$ and then expanding $\Phi_{v,j}$, we have
$$
\sum_{j=i}^k \Gamma_{v,i,j} \mu^{\scriptscriptstyle(3)}_{j-i}  \Phi_{v,j}
= \sum_{j=i}^k \sum_{\genfrac{}{}{0pt}{}{D'\in\mathcal{D}_j}{D\subseteq D'}} \mu^{\scriptscriptstyle(3)}_{j-i}  \Phi_{v,j}
= \sum_{j=i}^k \sum_{\genfrac{}{}{0pt}{}{D'\in\mathcal{D}_j}{D\subseteq D'}} \sum_{B\in P(D')} \mu^{\scriptscriptstyle(3)}_{j-i},$$
where $P(D')$ is the set of STS$(v)$ orthogonal to $D'$.
%Now we see that every STS$(v)$ $B$ whose dual space $B^\perp$
%is a subspace of $D$ occurs
%in this formula for every superspace $D'$ such that
%$D\subseteq D'\subseteq B^\perp$.
We continue:
\begin{eqnarray*}
&&
= \sum_{\genfrac{}{}{0pt}{}{D'\in\mathcal{D}}{D\subseteq D'}} \sum_{B\in P(D')} \mu^{\scriptscriptstyle(3)}_{\dim(D')-1-i}
= \sum_{\genfrac{}{}{0pt}{}{D'\in\mathcal{D}}{D\subseteq D'}} \sum_{\genfrac{}{}{0pt}{}{B\in P(D)}{B\perp D'}} \mu^{\scriptscriptstyle(3)}_{\dim(D')-1-i}
\\ &&
= \sum_{B\in P(D)} \sum_{\genfrac{}{}{0pt}{}{D'\in\mathcal{D}}{D\subseteq D',\,B\perp D'}} \mu^{\scriptscriptstyle(3)}_{\dim(D')-1-i}
=\sum_{B\in P(D)} \sum_{\genfrac{}{}{0pt}{}{D'\in \mathcal D:}{D\subseteq D'\subseteq B^\perp}}
 \mu^{\scriptscriptstyle(3)}_{\dim(D')-1-i}
 \\ &&
 \stackrel{D^*=D'/D }{=} 
\sum_{B\in P(D)}
\sum_{D^*\subseteq B^\perp/D}
 \mu^{\scriptscriptstyle(3)}_{\dim(D^*)}
 \stackrel{(\ref{eq:mob})}{=}
\sum_{B\in P(D)}
 (\mbox{$1$ if $B^\perp = D$; $0$ otherwise}).
\end{eqnarray*}
We see that the last formula meets the definition of $\Upsilon_{v,i}$.
\end{proof}
\begin{theorem}\label{th:N3}
Assume that $v$ is divided by $3^k$ and $k$
is the largest integer with this property.
Let $i \in \{0,\ldots, k\}$.
The total number of different STS$(v)$ of $3$-rank $v-i-1$ equals
$$ % \begin{equation}\label{eq:tot3}
 \Gamma_{v,0,i} \sum_{j=i}^k \Gamma_{v,i,j} \mu^{\scriptscriptstyle(3)}_{j-i} \Phi_{v,j},
 \qquad
 \mbox{ where }
 \mu^{\scriptscriptstyle(3)}_{l}=(-1)^l3^{\binom{l}{2}},
 \quad
\Phi_{v,j}=\Psi_{v/3^j}^{3^j}
\Lambda_{v/3^j}^{3^j(3^j-1)/6},
$$
$$
\Gamma_{v,i,j} =
 \left(\frac v{3^i} !\right)^{\!\!3^i} \bigg/
3^{\frac{(j+i+1)(j-i)}2}
\left(\frac v{3^j} !\right)^{\!\!3^{j}}
 \prod_{s=1}^{j-i}(3^s-1),
$$
$\Psi_{u}$ is the number of STS$(u)$
(and also the number of idempotent totally symmetric latin squares of order $u$),
and $\Lambda_u$ is the number of latin squares of order $u$.
\end{theorem}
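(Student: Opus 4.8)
The plan is to classify the Steiner triple systems of order $v$ by their dual spaces and then glue together Lemmas~\ref{l:ups} and~\ref{l:Gamma}. The first step is the observation that an STS$(v)$, regarded (as elsewhere in the paper) as a set $B$ of block vectors, has $3$-rank exactly $v-i-1$ if and only if its dual space $B^\perp$ has dimension $i+1$; this is just rank--nullity, $\dim\langle B\rangle+\dim B^\perp=v$. Because every block has three $1$s, the all-one vector is orthogonal to $B$ over $\mathrm{GF}(3)$, so $\mathbf 1\in B^\perp$, and $B^\perp$ is trivially orthogonal to the STS $B$; hence $B^\perp\in\mathcal D_i$. Conversely, if $B^\perp\in\mathcal D_i$ then the $3$-rank of $B$ is $v-(i+1)=v-i-1$, again by rank--nullity. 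Therefore the STS$(v)$ of $3$-rank $v-i-1$ are partitioned according to their dual space, which ranges over $\mathcal D_i$, and
$$
\#\{\,B:\ B\text{ an STS}(v)\text{ of }3\text{-rank }v-i-1\,\}=\sum_{D\in\mathcal D_i}\#\{\,B:\ B^\perp=D\,\}.
$$

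The second step is to substitute the two counting lemmas. By Lemma~\ref{l:ups}, for every $D\in\mathcal D_i$ the inner quantity $\#\{B:B^\perp=D\}$ equals $\Upsilon_{v,i}=\sum_{j=i}^k\Gamma_{v,i,j}\mu^{\scriptscriptstyle(3)}_{j-i}\Phi_{v,j}$; in particular it does not depend on $D$. By Lemma~\ref{l:Gamma} (its last assertion), $|\mathcal D_i|=\Gamma_{v,0,i}$ provided $\mathcal D_i\neq\emptyset$. Multiplying gives
$$
\#\{\,B:\ B\text{ an STS}(v)\text{ of }3\text{-rank }v-i-1\,\}=\Gamma_{v,0,i}\sum_{j=i}^k\Gamma_{v,i,j}\mu^{\scriptscriptstyle(3)}_{j-i}\Phi_{v,j},
$$
which is the asserted formula. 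The explicit forms printed in the statement follow termwise: $\mu^{\scriptscriptstyle(3)}_{l}=(-1)^l3^{\binom l2}$ is Lemma~\ref{l:Mob}, $\Phi_{v,j}=\Psi_{v/3^j}^{3^j}\Lambda_{v/3^j}^{3^j(3^j-1)/6}$ is Corollary~\ref{c:STS3Lat}, the parenthetical identification of $\Psi_u$ with the number of idempotent totally symmetric latin squares of order $u$ is Proposition~\ref{p:STS-TSLS}, and the displayed shape of $\Gamma_{v,i,j}$ is the one from Lemma~\ref{l:Gamma} after using $2^{\,j-i}[j-i]_3!=\prod_{s=1}^{j-i}(3^s-1)$, since $2\sum_{t=0}^{s-1}3^t=3^s-1$.

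The one point that deserves a line is the case $\mathcal D_i=\emptyset$, where Lemma~\ref{l:Gamma} does not literally supply $|\mathcal D_i|=\Gamma_{v,0,i}$. Writing $v=3^k w$ with $\gcd(w,3)=1$, one has $v/3^i\equiv3\pmod6$ whenever $i<k$, so $\mathcal D_i=\emptyset$ can occur only for $i=k$ with $w\equiv5\pmod6$; in that case there is no STS$(v)$ of $3$-rank $v-k-1$ (such a system would furnish a member of $\mathcal D_k$), and the claimed expression also vanishes because its only surviving summand is $\Gamma_{v,0,k}\Gamma_{v,k,k}\mu^{\scriptscriptstyle(3)}_0\Phi_{v,k}$ and $\Phi_{v,k}=\Psi_w^{3^k}\Lambda_w^{\,3^k(3^k-1)/6}=0$, there being no STS$(w)$. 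When $v/3^i\equiv1,3\pmod6$, $\mathcal D_i$ is nonempty: take the $(i+1)\times v$ matrix over $\mathrm{GF}(3)$ whose columns are the $3^i$ vectors of $\mathrm{GF}(3)^{i+1}$ with first coordinate $1$, each repeated $v/3^i$ times (so that its first row is $\mathbf 1$ and it has the structure of Lemma~\ref{l:3i}), and use Lemma~\ref{l:STS3Lat} to build an STS$(v)$ orthogonal to its row space from $3^i$ copies of an STS$(v/3^i)$ and $3^i(3^i-1)/6$ latin squares of order $v/3^i$.

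I do not expect a serious obstacle: the substantive combinatorics is already carried by Lemmas~\ref{l:3i}, \ref{l:Gamma}, \ref{l:STS3Lat}, and~\ref{l:ups}, and the theorem is their assembly; the only mild subtleties are the rank--nullity translation between ``$3$-rank $v-i-1$'' and ``dual space in $\mathcal D_i$'' and the degenerate empty-$\mathcal D_i$ case handled above.
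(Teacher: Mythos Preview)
Your proof is correct and follows the same approach as the paper's: partition the STS$(v)$ of $3$-rank $v-i-1$ by their dual space, invoke Lemma~\ref{l:ups} for the count per subspace and Lemma~\ref{l:Gamma} for $|\mathcal D_i|$, then multiply. Your explicit treatment of the degenerate case $\mathcal D_i=\emptyset$ (which can occur only at $i=k$ when $v/3^k\equiv 5\pmod 6$) is in fact more careful than the paper's two-sentence proof, which tacitly assumes nonemptiness.
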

\begin{proof}
The  number of STS$(v)$ of $3$-rank $v-i-1$ equals the number $\Upsilon_{v,i}$ of STS$(v)$ of $3$-rank $v-i-1$ orthogonal to a given subspace $D$ from $\mathcal{D}_i$
multiplied by the number $\Gamma_{v,0,i}$ of such subspaces. Utilizing the formulas from Lemma~\ref{l:ups} and Corollary~\ref{c:STS3Lat}, we get the result.
\end{proof}
\begin{corollary}\label{c:3concrete}
The number of STS$(v)$, $v=3^k$, of $3$-rank $v-k-1$ is
$$
\frac{v!}{3^{\frac{k(k+1)}2}\cdot 2^k \cdot[k]_3!}.%\prod\limits_{s=1}^k(3^s-1)}.
$$
The number of STS$(v)$, $v=3^{k}$, of $3$-rank $v-k$ is
%$$
%\frac{3^k!\cdot(2^{3^{k-2}(3^{k-1}-4)+1}
%\cdot 3^{\frac{3^{k-2}(3^{k-1}-7)}2 +k}-1)}
%{2^k\cdot3^{\frac{k(k+1)}2}\cdot [k-1]_3!}. %\prod\limits_{s=1}^{k-1}(3^s-1)}.
%$$
$$
\frac{v!\cdot(2^{v^2/27-4v/9+1}
\cdot 3^{{v^2/54-7v/18}+k}-1)}
{2^k\cdot3^{\frac{k(k+1)}2}\cdot [k-1]_3!}. %\prod\limits_{s=1}^{k-1}(3^s-1)}.
$$
The number of STS$(v)$, $v=3^k$, of $3$-rank $v-k+1$ is
$$
\frac{v!}{2^{k+2} \cdot 3^{\frac{k(k+1)}2-1} \cdot [k-2]_3!} % \times
$$
$$ \times\left(\frac{(2^{35}\cdot 3^8\cdot 5^2\cdot 7^2\cdot 5231 \cdot 3824477)^{\frac{v(v-9)}{486}}}{2^{4v/9-4}\cdot3^{v/3-2k+1}}
-
2^{v^2/27-4v/9+3}\cdot 3^{v^2/54-7v/18+k-1} + 1 \right).
$$
The number of STS$(v)$, $v=7\cdot 3^k$, of $3$-rank $v-k-1$ is
$$
\frac{v! \cdot 61479419904000^{\frac{3^k(3^k-1)}6}}
{2^k\cdot 3^{\frac{k(k+1)}2}\cdot 168^{\,3^k}\cdot [k]_3!}. % \prod\limits_{s=1}^k(3^s-1)}.
$$
% $$
% \frac{(7\cdot3^k)!\cdot\Lambda_7^{3^k(3^k-1)/6}}{2^{3^{k+1}}\cdot3^{3^k+k(k+1)/2}\cdot7^{3^k}\cdot\prod\limits_{s=1}^k(3^s-1)},
% ~\Lambda_7=61479419904000
% $$\\
\end{corollary}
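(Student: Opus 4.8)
The plan is to derive each of the four formulas by specializing Theorem~\ref{th:N3}: for the prescribed $v$ and $3$-rank one reads off the deficiency $i$ (Theorem~\ref{th:N3} counts STS$(v)$ of $3$-rank $v-i-1$) and then evaluates the finite sum $\sum_{j=i}^{k}\Gamma_{v,i,j}\,\mu^{\scriptscriptstyle(3)}_{j-i}\,\Phi_{v,j}$ by substituting the known small values of $\Psi$ and $\Lambda$. For $v=3^k$ the integer $k$ in Theorem~\ref{th:N3} equals this $k$, and the ranks $v-k-1$, $v-k$, $v-k+1$ correspond to $i=k$, $i=k-1$, $i=k-2$; for $v=7\cdot 3^k$ the largest $3$-power dividing $v$ is again $3^k$, and the rank $v-k-1$ corresponds to $i=k$. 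Hence the M\"obius sum has $k-i+1$ terms: one when $i=k$, two when $i=k-1$, three when $i=k-2$.

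First I would dispose of the one-term cases $i=k$, where $\Gamma_{v,k,k}=1$ and $\mu^{\scriptscriptstyle(3)}_0=1$, so the count is simply $\Gamma_{v,0,k}\,\Phi_{v,k}$ with $\Phi_{v,k}=\Psi_{v/3^k}^{3^k}\Lambda_{v/3^k}^{3^k(3^k-1)/6}$. For $v=3^k$ one has $v/3^k=1$, hence $\Psi_1=\Lambda_1=1$, $\Phi_{v,k}=1$, and $\Gamma_{v,0,k}=v!\big/\bigl(3^{k(k+1)/2}\prod_{s=1}^k(3^s-1)\bigr)$; the identity $\prod_{s=1}^m(3^s-1)=2^m[m]_3!$, immediate from $[m]_3!=\prod_{s=1}^m\frac{3^s-1}{2}$, then gives the first displayed formula. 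For $v=7\cdot 3^k$ one has $v/3^k=7$; substituting the known values $\Psi_7=30$ and $\Lambda_7=61479419904000$ and merging the factor $(7!)^{3^k}=5040^{3^k}$ from the denominator of $\Gamma_{v,0,k}$ with $30^{3^k}$ produces the $168^{3^k}$ of the last displayed formula.

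For the two-term case $v=3^k$, $i=k-1$ (rank $v-k$): here $v/3^{k-1}=3$, so $\Phi_{v,k-1}=\Psi_3^{v/3}\Lambda_3^{(v/3)(v/3-1)/6}=12^{v^2/54-v/18}$ from $\Psi_3=1$, $\Lambda_3=12$, while $\Phi_{v,k}=1$ and $\mu^{\scriptscriptstyle(3)}_1=-1$; combining this with $\Gamma_{v,k-1,k-1}=1$ and the explicit $\Gamma_{v,k-1,k}$, $\Gamma_{v,0,k-1}$ of Lemma~\ref{l:Gamma}, pulling out the common factor $v!\big/\bigl(2^k\,3^{k(k+1)/2}\,[k-1]_3!\bigr)$, and reducing the powers of $2$ and $3$ (via $12=2^2\cdot3$ and $6=2\cdot3$) yields the second displayed formula. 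The three-term case $v=3^k$, $i=k-2$ (rank $v-k+1$) runs the same way, but now also needs $\Psi_9=840$ and the prime factorization $\Lambda_9=2^{35}\cdot3^8\cdot5^2\cdot7^2\cdot5231\cdot3824477$ in order to write $\Phi_{v,k-2}=840^{v/9}\Lambda_9^{v(v-9)/486}$; together with $\Phi_{v,k-1}=12^{v^2/54-v/18}$, $\Phi_{v,k}=1$, $\mu^{\scriptscriptstyle(3)}_2=3$, $\mu^{\scriptscriptstyle(3)}_1=-1$, $\mu^{\scriptscriptstyle(3)}_0=1$ and the three $\Gamma$'s, extracting the prefactor $v!\big/\bigl(2^{k+2}\,3^{k(k+1)/2-1}\,[k-2]_3!\bigr)$ gives the third displayed formula.

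The only genuine work is the bookkeeping of prime-power exponents: everything collapses to tracking powers of $2$ and $3$ (and, in the third formula, of the primes dividing $\Lambda_9$), so there is no conceptual obstacle, but the three-term case is long enough that an arithmetic slip is the principal risk. I would guard against this by checking each resulting expression against a direct evaluation of the sum in Theorem~\ref{th:N3} for the smallest admissible parameters --- for instance at $v=9$, where the three formulas count STS$(9)$ of $3$-ranks $6$, $7$, $8$ and, the STS$(9)$ being unique up to isomorphism, must sum to $\Psi_9=840$ with a single nonzero term, and at $v=27$ --- and by verifying that each formula evaluates to a nonnegative integer for every relevant $k$.
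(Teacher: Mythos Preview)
Your proposal is correct and follows essentially the same approach as the paper: both proofs consist of substituting the known small values $\Psi_1=\Psi_3=1$, $\Psi_7=30$, $\Psi_9=840$, $\Lambda_1=1$, $\Lambda_3=12$, $\Lambda_7=61479419904000$, $\Lambda_9=2^{35}\cdot 3^8\cdot 5^2\cdot 7^2\cdot 5231\cdot 3824477$ into the formula of Theorem~\ref{th:N3} and simplifying. The paper's own proof is a one-line citation of these values, whereas you spell out the identification of $i$ in each case, the number of terms in the M\"obius sum, the identity $\prod_{s=1}^m(3^s-1)=2^m[m]_3!$, and the sanity checks at $v=9$ and $v=27$; this extra detail is helpful but not a different method.
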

\begin{proof}
According to \cite{A002860}, we have
$\Lambda_1=1$, $\Lambda_3=12$,  $\Lambda_7=61479419904000 =2^{18} \cdot 3^5 \cdot 5^3 \cdot 7 \cdot 1103$
\cite{Sade:48,Saxena:7x7},
$\Lambda_9=5524751496156892842531225600= 2^{35}\cdot 3^8\cdot 5^2\cdot 7^2\cdot 5231 \cdot 3824477$ \cite{BamRoth:75}
(the last known value is $\Lambda_{11}$ \cite{MK-W:11}).
According to \cite{A030128},
we have $\Psi_1=\Psi_3=1$, $\Psi_7=30$, $\Psi_9=840$
(the last known value is $\Psi_{19}$ \cite{KO:STS19}).
Applying the result of Theorem~\ref{th:N3}, we get the formulas.
\end{proof}
A computer-aided classification of equivalence classes
of STS$(27)$ of $3$-rank $24$ is described in \cite{JMTW:STS27}. 
In particular, the total number of different systems with these parameters can be calculated from \cite[Table 1]{JMTW:STS27} as the sum $\sum N_r27!/|\mathrm{Aut}(\mathcal{S})|$ over the all rows of the table except the last one (corresponding to the $3$-rank $23$). This number coincides with the one given by our formula, $22\,300\,404\,167\,684\,260\,773\,163\,008\,000\,000$.
%-------------------------------------------------------
%-------------------------------------------------------
\section{Non-full-2-rank STS}\label{s:2}

In this section, to simplify the formulas, we denote the order of STS by $w-1$ instead of $v$.
By $\dot V^{w-1}$, we denote the vector space of all $(w-1)$-tuples over $\mathrm{GF}(2)$.
Denote by $\dot{\mathcal{D}}_i$ the set of $i$-dimensional subspaces of $\dot V^{w-1}$
orthogonal to at least one STS$(w-1)$.
The following lemma can be considered as a treatment of the results of \cite[Sect.~3]{DHV:1978}
in terms of the structure of a basis for the dual space of STS.

\begin{lemma}[{\cite[Thm 4.1]{JunTon:18}}]\label{l:2i}
 Let $M$ be an $i\times (w-1)$ generator matrix for $D \in \dot{\mathcal{D}}_i$.
 Then each of the $2^i-1$ nonzero columns of height $i$
 occurs $w/2^i$ times as a column of $M$,
 while the all-zero column occurs $w/2^i-1$ times; e.g.,
$$\left(\begin{array}{c@{\,}c@{\,}c@{\ }c@{\,}c@{\,}c@{\,}c@{\ }c@{\,}c@{\,}c@{\,}c@{\ }c@{\,}c@{\,}c@{\,}c@{\ }c@{\,}c@{\,}c@{\,}c@{\ }c@{\,}c@{\,}c@{\,}c@{\ }c@{\,}c@{\,}c@{\,}c@{\ }c@{\,}c@{\,}c@{\,}c}
0&0&0&0&0&0&0&0&0&0&0&0&0&0&0& 1&1&1&1&1&1&1&1&1&1&1&1&1&1&1&1 \\
0&0&0&0&0&0&0&1&1&1&1&1&1&1&1& 0&0&0&0&0&0&0&0&1&1&1&1&1&1&1&1 \\
0&0&0&1&1&1&1&0&0&0&0&1&1&1&1& 0&0&0&0&1&1&1&1&0&0&0&0&1&1&1&1
\end{array}\right).
$$
\end{lemma}
\begin{proof}
Claim (*). \emph{If $a$ and $b$ are different nonzero columns of $M$, then $a+b$ is also a column of $M$}.
The proof is similar to that of Claim (*) in the proof of Lemma~\ref{l:2i}.

Since the rank of $M$ is $i$, it contains $i$ linearly independent columns.
It follows from (*) that it contains all $2^i-1$ different nonzero columns of height $i$.

It remains to show that every nonzero column $a$ occurs $|K|+1$ times,
where $K$ is the set of positions in which $M$ has the all-zero column.
Let $J$ be the sets of positions in which $M$ has the column $a$, and let $l\in J$.
For each $j$ from $J\backslash \{l\}$, there is $k$ from $K$ such that $\{j,k,l\}$ is a block of the STS.
Moreover, different $j$s correspond to different $k$s. This shows that $|J\backslash \{l\}| \le |K|$. Similarly,  $|K|\le |J\backslash \{l\}|$.
\end{proof}
% \begin{example} If $w=32$, then a generator matrix for a subspace from $\dot{\mathcal{D}}_3$ has the following form, up to permutation of the columns:
% $$\left(\begin{array}{c@{\,}c@{\,}c@{\,}c@{\,}c@{\,}c@{\,}c@{\,}c@{\,}c@{\,}c@{\,}c@{\,}c@{\,}c@{\,}c@{\,}c@{\,}c@{\,}c@{\,}c@{\,}c@{\,}c@{\,}c@{\,}c@{\,}c@{\,}c@{\,}c@{\,}c@{\,}c@{\,}c@{\,}c@{\,}c@{\,}c}
% 0&0&0&0&0&0&0&0&0&0&0&0&0&0&0& 1&1&1&1&1&1&1&1&1&1&1&1&1&1&1&1 \\
% 0&0&0&0&0&0&0&1&1&1&1&1&1&1&1& 0&0&0&0&0&0&0&0&1&1&1&1&1&1&1&1 \\
% 0&0&0&1&1&1&1&0&0&0&0&1&1&1&1& 0&0&0&0&1&1&1&1&0&0&0&0&1&1&1&1
% \end{array}\right).
% $$
% \end{example}

\begin{lemma}\label{l:Gamma2}
Let $i$ and $j$ be nonnegative integers such that $i\le j$.
If $\dot{\mathcal{D}}_{j}$ is not empty, then every subspace from $\dot{\mathcal{D}}_i$
is contained in exactly $\dot\Gamma_{w,i,j}$ subspaces from $\dot{\mathcal{D}}_j$, where
\begin{equation}\label{eq:Gamma2}
\dot\Gamma_{w,i,j} =
\left(\frac{w}{2^i}!\right)^{\!\!2^i} \bigg/
2^{\frac{(j-i)(j+i+1)}2}
\left(\frac{w}{2^j}!\right)^{\!\!2^j}
[j-i]_2!.
\end{equation}
In particular, $|\dot{\mathcal{D}}_j|= \dot\Gamma_{w,0,j}$.
\end{lemma}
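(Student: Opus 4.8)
The plan is to mirror the argument of Lemma~\ref{l:Gamma} almost verbatim, replacing $3$ by $2$ throughout and being slightly careful about the all-zero column. Fix a subspace $D_i \in \dot{\mathcal D}_i$ and count the subspaces $D_j \in \dot{\mathcal D}_j$ with $D_i \subseteq D_j$; since all subspaces of $\dot{\mathcal D}_i$ are equivalent under coordinate permutations (as follows from Lemma~\ref{l:2i}, every generator matrix has the same column multiset up to permutation), this count does not depend on the choice of $D_i$, and taking $i=0$ (so $D_0$ is the zero subspace) gives $|\dot{\mathcal D}_j| = \dot\Gamma_{w,0,j}$.

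First I would choose a generator matrix $M_i$ of $D_i$ and invoke Lemma~\ref{l:2i}: $M_i$ partitions the $w-1$ coordinates into $2^i$ cells, one for each column value of height $i$; the $2^i-1$ cells indexed by nonzero columns have size $w/2^i$, and the cell indexed by the zero column has size $w/2^i - 1$. Any $D_j \supseteq D_i$ has a generator matrix $M_j$ whose first $i$ rows are those of $M_i$, and by Lemma~\ref{l:2i} applied to $M_j$ this matrix refines each cell into subcells so that the subcell structure again has $2^j - 1$ subcells of size $w/2^j$ and one of size $w/2^j - 1$. Crucially, the ``short'' subcell (zero column of height $j$) must lie inside the ``short'' cell (zero column of height $i$), because prepending zeros to the zero column of height $j-i$ is the only way to extend the zero column of height $i$; all other cells of size $w/2^i$ split into $2^{j-i}$ full subcells of size $w/2^j$. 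So the number $A$ of such subdivisions is
\[
A = \frac{\left(\dfrac{w/2^i-1}{(w/2^j-1),\,(w/2^j),\ldots,(w/2^j)}\right)\cdot \left(\dfrac{(w/2^i)!}{((w/2^j)!)^{2^{j-i}}}\right)^{2^i-1}}{1},
\]
and a short computation (the multinomial with one short part telescopes against the factor $(w/2^j)!$) shows $A = \big((w/2^i)!\big)^{2^i}\big/\big((w/2^j)!\big)^{2^j}$, exactly as in Lemma~\ref{l:Gamma}.

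Next I would count how many matrices $M_j$ (extending the fixed $M_i$ by $j-i$ further rows, each new row not in the span of the rows already written above it) generate the same code $D_j$. Row $i+1$ can be any of the $2^j - 2^i$ vectors of $D_j$ outside $\langle M_i\rangle$; row $i+2$ any of the $2^j - 2^{i+1}$ vectors outside the span of the first $i+1$ rows; and so on, giving
\[
B = \prod_{s=1}^{j-i}\big(2^j - 2^{i+s-1}\big) = 2^{\frac{(j-i)(j+i-1)}{2}}\prod_{s=1}^{j-i}\big(2^{j-i-s+1}-1\big) = 2^{\frac{(j-i)(j+i-1)}{2}}\,[j-i]_2!\cdot\!\!\prod_{s=1}^{j-i}\!1,
\]
where I use $[n]_2! = \prod_{s=1}^n (2^s-1)$. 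Then the number of $D_j \in \dot{\mathcal D}_j$ containing $D_i$ is $A/B$, and I would check that $A/B$ equals the claimed $\dot\Gamma_{w,i,j}$; the exponent bookkeeping $\frac{(j-i)(j+i-1)}{2}$ versus the stated $\frac{(j-i)(j+i+1)}{2}$ needs to be reconciled, and I expect it comes out right after one observes $2^j - 2^{i+s-1} = 2^{i+s-1}(2^{j-i-s+1}-1)$, so the total power of $2$ in $B$ is $\sum_{s=1}^{j-i}(i+s-1) = (j-i)i + \binom{j-i}{2}$; combined with how the power of $2$ hidden in $A$ interacts, this is the one spot demanding care.

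The main obstacle is precisely this last bookkeeping together with the role of the all-zero column: one must make sure the short cell/subcell nesting is forced (so that $A$ has the clean form above with no leftover combinatorial factor) and that the powers of $2$ from $B$ match the exponent $\tfrac{(j-i)(j+i+1)}{2}$ in~(\ref{eq:Gamma2}). Everything else is a routine transcription of the proof of Lemma~\ref{l:Gamma} with $q=3$ replaced by $q=2$ and the factor $2^{j-i}$ (coming from the two non-identity squares per triple of cells in the ternary case) absent here.
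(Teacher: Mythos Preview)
Your approach is exactly the paper's: count ordered subdivisions $A$, divide by the number $B$ of generator-matrix extensions yielding the same code, and read off $A/B$. Your computation of $B$ is correct, $B = 2^{(j-i)(j+i-1)/2}\,[j-i]_2!$. The error is in your simplification of $A$.

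You claim the multinomial telescopes to
\[
A \;=\; \frac{\bigl((w/2^i)!\bigr)^{2^i}}{\bigl((w/2^j)!\bigr)^{2^j}},
\]
``exactly as in Lemma~\ref{l:Gamma}.'' It does not. Carrying out the short computation you allude to,
\[
A \;=\; \frac{(w/2^i-1)!\,\bigl((w/2^i)!\bigr)^{2^i-1}}{(w/2^j-1)!\,\bigl((w/2^j)!\bigr)^{2^j-1}}
\;=\; \frac{\bigl((w/2^i)!\bigr)^{2^i}\big/(w/2^i)}{\bigl((w/2^j)!\bigr)^{2^j}\big/(w/2^j)}
\;=\; \frac{\bigl((w/2^i)!\bigr)^{2^i}}{2^{\,j-i}\,\bigl((w/2^j)!\bigr)^{2^j}}.
\]
The short cell contributes an extra factor $1/2^{\,j-i}$, not $1$. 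This is precisely the ``power of $2$ hidden in $A$'' you were uneasy about, and it is what reconciles your exponent: $(j-i)+\tfrac{(j-i)(j+i-1)}{2}=\tfrac{(j-i)(j+i+1)}{2}$, matching~(\ref{eq:Gamma2}). The paper's proof displays exactly this version of $A$. Your closing remark that the $2^{j-i}$ ``from the ternary case'' is absent here is correct but misleading: in the ternary formula that factor is $(q-1)^{j-i}$, which indeed vanishes for $q=2$; the $2^{j-i}$ in the binary formula has a different origin, namely the asymmetry of the zero-column cell.
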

\begin{proof}
The proof is similar to that of Lemma~\ref{l:Gamma}.
The difference is that the size of one cell, corresponding to the all-zero columns
of the generator matrix, is one less than for each of the other cells;
the same can be said for the subcells. So, totally we have
$$
A={\left(\frac{w}{2^i}!\right)^{\!\!2^i-1}\left(\frac{w}{2^i}-1\right)!}\Big/{\left(\frac{w}{2^j}!\right)^{\!\!2^j-1}\left(\frac{w}{2^j}-1\right)!}
 ={\left(\frac{w}{2^i}!\right)^{\!\!2^i}}\Big/{2^{j-i}\left(\frac{w}{2^j}!\right)^{\!\!2^j}}$$
subdivisions. Dividing this number by the number
$B=2^{\frac{(j-i)(j+i-1)}{2}}\prod\limits_{s=1}^{j-i}(2^s-1)$
of the matrices generating the same space, we get the result.
\end{proof}

The following lemma describes the structure of an arbitrary non-full-$2$-rank STS.
It was proved in \cite{ZZ:2013:struct} for the partial case
of STS$(2^k-1)$ of rank $2^k+2$;
the arguments, however, are applicable to the general case.
It should be also noted that Theorem~4.1 in \cite{Assmus:95} is close to this result,
but the structure of the part of the block set connected with latin squares is not described there
(with the exception of one partial example in Remark~6).

\begin{lemma}[the structure of non-full-$2$-rank STS \cite{JunTon:18+}]\label{l:STS2Lat}
Given a subspace $D$ from $\dot{\mathcal{D}}_j$,
the set of STS$(w-1)$ orthogonal to $D$ is in one-to-one correspondence
with the collections of one STS$(w/2^j-1)$,
$2^j-1$ symmetric latin squares of order $w/2^j-1$,
and ${(2^j-1)(2^j-2)/6}$ latin squares of order $w/2^j$.
\end{lemma}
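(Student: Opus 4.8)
The plan is to mimic the proof of Lemma~\ref{l:STS3Lat}, adapting it to the binary setting where one of the cells plays a special role (the one indexed by the all-zero column). First I would fix a generator matrix $M$ for $D\in\dot{\mathcal D}_j$. By Lemma~\ref{l:2i}, the $w-1$ coordinates split into $2^j$ \emph{groups}: one ``zero group'' $G_0$ of size $w/2^j-1$ corresponding to the all-zero column, and $2^j-1$ ordinary groups $G_a$, each of size $w/2^j$, indexed by the nonzero columns $a$ of height $j$. Now take any STS$(w-1)$ orthogonal to $D$ and classify its blocks $\{x,y,z\}$ according to the multiset of groups containing $x,y,z$. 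Orthogonality to every row of $M$ forces the sum of the three corresponding columns to be $0$ over $\mathrm{GF}(2)$; this is the combinatorial constraint that controls which group-patterns can occur.

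The key step is to enumerate the admissible patterns and match each family of blocks to the claimed combinatorial object. There are three cases. (i) All three points lie in the same group $G_a$: the column sum is $a+a+a=a$, so this is possible only when $a=0$, i.e.\ only inside the zero group; the triples contained in $G_0$ then form, by the defining property of an STS, a Steiner triple system of order $|G_0|=w/2^j-1$ — this is the single STS$(w/2^j-1)$. (ii) Two points in one group, one in another, say $x,y\in G_a$ and $z\in G_b$ with $a\ne b$: the column sum is $a+a+b=b$, so necessarily $b=0$; thus $x,y$ range over an ordinary group $G_a$ and $z$ over the zero group $G_0$. For fixed nonzero $a$, writing $G_a=\{\alpha_1,\dots\}$ and $G_0=\{\gamma_1,\dots\}$ and reading off the third point as a function $\{\alpha_x,\alpha_y\}\mapsto \gamma_{g_a(x,y)}$, the STS axiom (every pair from $G_a$ is in exactly one block, every pair $\alpha_x,\gamma_k$ is in exactly one block) translates precisely into: $g_a$ is a symmetric latin square of order $|G_a|=w/2^j-1$, one for each of the $2^j-1$ nonzero columns $a$. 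Here I would invoke the standard dictionary (in the spirit of Propositions~\ref{p:STS-TSLS} and~\ref{p:SLS-SLS0}) between such ``transversal + diagonal'' block families and symmetric latin squares, being a little careful about the role of the diagonal entries $g_a(x,x)$, which encode the pairs entirely inside $G_a$. (iii) The three points lie in three distinct groups $G_a,G_b,G_c$: the column sum condition is $a+b+c=0$ with $a,b,c$ pairwise distinct and, in fact, all nonzero (if one were zero, two of them would coincide). The number of such unordered triples $\{a,b,c\}$ of nonzero vectors summing to zero is $(2^j-1)(2^j-2)/6$. For each such triple, reading the third coordinate as a function of the first two gives a latin square $f$ of order $w/2^j$ (all three groups here have full size $w/2^j$), exactly as in Lemma~\ref{l:STS3Lat}.

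Having shown every orthogonal STS decomposes into these pieces, the converse is routine: given one STS$(w/2^j-1)$ on $G_0$, symmetric latin squares $g_a$ on the $G_a$'s, and latin squares $f$ on the zero-sum triples, one reassembles a block set and checks by the same case analysis that every $2$-subset of the $(w-1)$-point set lies in exactly one block and that the result is orthogonal to $M$; this gives the bijection. I expect the main obstacle to be case (ii): one must verify carefully that the block family using two points of an ordinary group and one point of the zero group is governed by a \emph{symmetric} latin square of order $w/2^j-1$ rather than $w/2^j$, i.e.\ that the diagonal is absorbed correctly — this is where the ``$-1$'' in the size of the zero group and the parity argument underlying Proposition~\ref{p:SLS-SLS0} enter, and it is the only place where the binary case genuinely differs from the ternary case of Lemma~\ref{l:STS3Lat}. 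Since, as noted in the statement, this decomposition was already established in \cite{ZZ:2013:struct} for STS$(2^k-1)$ of $2$-rank $2^k+2$ and the argument there does not use the special form of $v$, it suffices to remark that the same reasoning applies verbatim; I would therefore present the above as a sketch and defer to \cite{JunTon:18+,ZZ:2013:struct} for the verification of the latin-square part.
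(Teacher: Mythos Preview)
Your approach is essentially identical to the paper's sketch: the same partition of coordinates via Lemma~\ref{l:2i}, the same three-case analysis of block types according to the column-sum constraint, and the same identification of the resulting pieces with one small STS, $2^j-1$ symmetric latin squares, and $(2^j-1)(2^j-2)/6$ ordinary latin squares.

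There is one slip in case~(ii) worth correcting. You write ``$g_a$ is a symmetric latin square of order $|G_a|=w/2^j-1$'', but for nonzero $a$ the ordinary group $G_a$ has size $w/2^j$, not $w/2^j-1$; it is the zero group $G_0$ that has size $w/2^j-1$. Consequently the function $g_a$ you describe is defined on off-diagonal pairs from a set of size $w/2^j$ and takes values in a set of size $w/2^j-1$, so it is not directly a latin square of either order. The paper's device is to adjoin an extra symbol on the diagonal, setting $f(x,x)\equiv w/2^j$; then $f$ \emph{is} a symmetric latin square of order $w/2^j$ with constant diagonal, and Proposition~\ref{p:SLS-SLS0} gives the bijection with symmetric latin squares of order $w/2^j-1$. (In particular, $g_a(x,x)$ does not ``encode pairs entirely inside $G_a$'': such a pair already determines a block of type~(ii), not a diagonal value.) You clearly anticipate this mechanism in your last paragraph, so the fix is only to state the intermediate object correctly rather than to change the argument.
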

\proof[Proof (a sketch)]
According to Lemma~\ref{l:3i}, a generator matrix $M$ of $D$ divides the coordinates
into $2^j-1$ groups of size $w/2^j$ and one group
of size $w/2^j-1$ (the last group corresponds to the all-zero columns of $M$). 
It can be seen that the set of triples of every STS$(v)$ orthogonal to $D$
is divided into the $2^j+{(2^j-1)(2^j-2)/6}$ following subsets:
\begin{itemize}
    \item The triples with all $3$ points in the group of
    size $w/2^j-1$ form STS$(w/2^j-1)$.
    \item The triples with one points in the group 
    $\{\gamma_1,\ldots,\gamma_{w/2^j-1}\}$ of
    size $w/2^j-1$ 
    and two points in one of the $2^j-1$ groups 
    $\{\alpha_1,\ldots,\alpha_{w/2^j}\}$ of size $w/2^j$.
    Such triples have the form $\{\alpha_x,\alpha_y,\gamma_{f(x,y)}\}$ for some
    symmetric latin square $f$ satisfying $f(x,x)\equiv w/2^j$. Proposition~\ref{p:SLS-SLS0} relates $f$ with a symmetric latin square of order $w/2^j-1$.
    \item For every $3$ distinct groups $\{\alpha_1,\ldots,\alpha_{w/2^j}\}$,
     $\{\beta_1,\ldots,\beta_{w/2^j}\}$,  $\{\gamma_1,\ldots,\gamma_{w/2^j}\}$ corresponding to columns $a$, $b$, $c$ 
    with $a+b+c=0$,  
    the triples with one point in each of these $3$ groups have the form $\{\alpha_x,\beta_y,\gamma_{f(x,y)}\}$ for some latin square $f$ of order~$v/2^j$.
    \qed
\end{itemize}
%\end{proof}

\begin{corollary}[\cite{JunTon:18+}]\label{c:STS2Lat}
Given a subspace $D$ from $\dot{\mathcal{D}}_j$,
the number $\dot\Phi(D)$ of STS$(w-1)$ orthogonal to $D$ equals $\dot\Phi_{w-1,j}$,
where
$$
\dot\Phi_{w-1,j}=
\Psi_{w/2^j-1}
\Pi_{w/2^j-1}^{2^j-1}
\Lambda_{w/2^j}^{(2^j-1)(2^j-2)/6}
,$$
$\Psi_{u}$ is the number of STS$(u)$
(and the number of idempotent totally symmetric latin squares of order $u$),
$\Pi_u$ is the number of symmetric latin squares of order $u$,
$\Lambda_u$ is the number of latin squares of order $u$.
\end{corollary}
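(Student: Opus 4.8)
The plan is to read the count straight off the structural bijection of Lemma~\ref{l:STS2Lat}. That lemma presents the set of STS$(w-1)$ orthogonal to a fixed $D\in\dot{\mathcal{D}}_j$ as being in one-to-one correspondence with a Cartesian product of independent combinatorial choices, so $\dot\Phi(D)$ will simply be the product of the cardinalities of the factors; the only work is to name those cardinalities and, in one case, to count an index set.

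Concretely, I would first fix a generator matrix $M$ of $D$; by Lemma~\ref{l:2i} it partitions the $w-1$ coordinates into $2^j-1$ ``big'' groups of size $w/2^j$ together with one distinguished group of size $w/2^j-1$ (the coordinates carrying the all-zero column of $M$). Lemma~\ref{l:STS2Lat} then gives a bijection from the STS$(w-1)$ orthogonal to $D$ onto the collections consisting of an STS on the distinguished group; for each of the $2^j-1$ big groups, a symmetric latin square of order $w/2^j-1$ (obtained via Proposition~\ref{p:SLS-SLS0} from the symmetric latin square of order $w/2^j$ with constant diagonal that governs the triples with two points in that group and one point in the distinguished group); and, for each unordered triple of distinct big groups whose associated columns $a,b,c$ satisfy $a+b+c=0$, a latin square of order $w/2^j$. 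Multiplying the numbers of choices yields $\Psi_{w/2^j-1}$ from the STS, $\Pi_{w/2^j-1}^{2^j-1}$ from the symmetric latin squares, and $\Lambda_{w/2^j}^{N}$ from the latin squares, where $N$ is the number of admissible triples. Counting $N$ is the one genuinely arithmetic step: a triple $\{a,b,c\}$ of distinct nonzero vectors of $\mathrm{GF}(2)^j$ with $a+b+c=0$ is exactly the set of nonzero vectors of a $2$-dimensional subspace, so $N$ equals the Gaussian binomial $\binom{j}{2}_{2}=(2^j-1)(2^{j-1}-1)/3=(2^j-1)(2^j-2)/6$. Substituting gives the stated expression for $\dot\Phi_{w-1,j}$, and since it depends only on $w$ and $j$, it does not depend on the particular $D\in\dot{\mathcal{D}}_j$.

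I do not anticipate a genuine obstacle: the single non-formal ingredient — that the STS, the $2^j-1$ symmetric latin squares, and the $N$ latin squares may be prescribed completely freely and independently, with no hidden compatibility conditions, so that every point of the Cartesian product is realized by a unique STS$(w-1)$ orthogonal to $D$ — is precisely the content of Lemma~\ref{l:STS2Lat}, with Proposition~\ref{p:SLS-SLS0} accounting for the translation between symmetric latin squares of order $w/2^j$ with constant diagonal and symmetric latin squares of order $w/2^j-1$. Thus the corollary reduces to the elementary bookkeeping above.
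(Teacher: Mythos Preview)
Your proposal is correct and is exactly the approach the paper intends: the corollary is stated without proof because it is read off directly from the bijection of Lemma~\ref{l:STS2Lat}, and you have simply made that reading explicit (including the small computation $N=\binom{j}{2}_{2}=(2^j-1)(2^j-2)/6$ for the number of zero-sum triples of nonzero columns, which the paper leaves implicit in the exponent).
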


\begin{lemma}\label{l:ups2}
Assume that $w$ is divided by $2^k$ and $k$
is the largest integer with this property.
Let $i \in \{0,\ldots, k\}$,
and let $D$ be in $\dot{\mathcal{D}}_i$.
The number of STS$(w-1)$ with dual space $D$ equals $\dot\Upsilon_{w-1,i}$, where
\begin{equation}\label{eq:ups2}
 \dot\Upsilon_{w-1,i} = \sum_{j=i}^k \dot\Gamma_{w,i,j} \mu^{\scriptscriptstyle(2)}_{j-i}  \dot\Phi_{w-1,j},
\end{equation}
where $\dot\Gamma_{w,i,j}$ and $\dot\Phi_{w-1,j}$
are from Lemma~\ref{l:Gamma} and Corollary~\ref{c:STS3Lat}.
\end{lemma}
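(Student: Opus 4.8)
The plan is to run the proof of Lemma~\ref{l:ups} essentially verbatim, replacing the ternary data $(\mathcal{D},\Gamma,\Phi,\mu^{\scriptscriptstyle(3)})$ by the binary data $(\dot{\mathcal D},\dot\Gamma,\dot\Phi,\mu^{\scriptscriptstyle(2)})$ and the all-one-vector bookkeeping by the plain subspace lattice (so a space of $\dot{\mathcal D}_j$ now has dimension $j$ rather than $j+1$). Write $\dot P(D')$ for the set of STS$(w-1)$ orthogonal to $D'$. First I would rewrite the right-hand side of \eqref{eq:ups2} combinatorially: by Lemma~\ref{l:Gamma2}, whenever $\dot{\mathcal D}_j\ne\emptyset$ the number $\dot\Gamma_{w,i,j}$ equals the number of $D'\in\dot{\mathcal D}_j$ with $D\subseteq D'$, and by Corollary~\ref{c:STS2Lat} each such $D'$ has exactly $\dot\Phi_{w-1,j}=|\dot P(D')|$ orthogonal systems; while if $\dot{\mathcal D}_j=\emptyset$ then STS$(w/2^j-1)$ does not exist (the symmetric latin squares and latin squares occurring in Lemma~\ref{l:STS2Lat} always do), so $\dot\Phi_{w-1,j}=0$ and that index contributes $0$ to both sides. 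Merging the sum over $j$ and over $D'\in\dot{\mathcal D}_j$ into a single sum over $D'\in\dot{\mathcal D}$ with $D\subseteq D'$, this gives
$$
\sum_{j=i}^{k}\dot\Gamma_{w,i,j}\,\mu^{\scriptscriptstyle(2)}_{j-i}\,\dot\Phi_{w-1,j}
=\sum_{\genfrac{}{}{0pt}{}{D'\in\dot{\mathcal D}}{D\subseteq D'}}\ \sum_{B\in\dot P(D')}\mu^{\scriptscriptstyle(2)}_{\dim(D')-i}\,,
$$
where the range $i\le\dim(D')\le k$ is automatic: $D\subseteq D'$ forces $\dim(D')\ge i$, and by Lemma~\ref{l:2i}, $D'\in\dot{\mathcal D}$ forces $2^{\dim(D')}\mid w$, hence $\dim(D')\le k$.

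Next I would interchange the order of summation. For a fixed STS $B$, the conditions ``$D\subseteq D'\in\dot{\mathcal D}$ and $B\in\dot P(D')$'' are equivalent to ``$B\in\dot P(D)$ and $D\subseteq D'\subseteq B^\perp$'', using that every subspace $D'$ with $D\subseteq D'\subseteq B^\perp$ automatically lies in $\dot{\mathcal D}$, being orthogonal to the STS $B$. Thus the sum becomes $\sum_{B\in\dot P(D)}\sum_{D\subseteq D'\subseteq B^\perp}\mu^{\scriptscriptstyle(2)}_{\dim(D')-i}$. Passing to the quotient $B^\perp/D$ via $D^*=D'/D$ is a dimension-shifting bijection between the interval $[D,B^\perp]$ and the full subspace lattice of $B^\perp/D$, with $\mu^{\scriptscriptstyle(2)}_{\dim(D')-i}=\mu^{\scriptscriptstyle(2)}_{\dim(D^*)}$, so by the defining identity \eqref{eq:mob} with $q=2$ the inner sum $\sum_{D^*\subseteq B^\perp/D}\mu^{\scriptscriptstyle(2)}_{\dim(D^*)}$ is $1$ if $B^\perp/D$ is the zero space and $0$ otherwise. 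Hence
$$
\sum_{j=i}^{k}\dot\Gamma_{w,i,j}\,\mu^{\scriptscriptstyle(2)}_{j-i}\,\dot\Phi_{w-1,j}
=\#\{\,B\in\dot P(D):B^\perp=D\,\}\,,
$$
which is exactly the number of STS$(w-1)$ whose dual space is $D$, i.e.\ $\dot\Upsilon_{w-1,i}$.

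Since the argument is structurally identical to that of Lemma~\ref{l:ups}, I do not expect a genuine obstacle; the points deserving care are purely bookkeeping. One must track the dimension shift ($\dim D=i$ here, not $i+1$) everywhere it enters Lemma~\ref{l:Gamma2} and the exponents of $\mu^{\scriptscriptstyle(2)}$, and one should check that the new feature of the binary setting --- that the cell (and each subcell) coming from the all-zero column of a generator matrix is one smaller than the others --- is already fully absorbed into the closed forms $\dot\Gamma_{w,i,j}$ and $\dot\Phi_{w-1,j}$ supplied by Lemma~\ref{l:Gamma2} and Corollary~\ref{c:STS2Lat}, so that it never surfaces in the M\"obius cancellation. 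It is also worth recording explicitly that $B^\perp\in\dot{\mathcal D}$ and $\dim B^\perp\le k$ for every STS $B$ (so the interval $[D,B^\perp]$ sits inside the poset we work in), and that the extreme indices $j$ with $\dot{\mathcal D}_j=\emptyset$ cause no trouble because $\dot\Phi_{w-1,j}=0$ there; if a fully explicit formula is wanted, one finally substitutes $\mu^{\scriptscriptstyle(2)}_{\ell}=(-1)^\ell 2^{\binom{\ell}{2}}$ from Lemma~\ref{l:Mob}.
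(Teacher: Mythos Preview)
Your proposal is correct and follows exactly the approach the paper intends: the paper simply states that the proof is ``the same as that of Lemma~\ref{l:ups}'' and omits it, and you have faithfully carried out that translation from the ternary to the binary setting. If anything, you are more explicit than the paper about the bookkeeping (the dimension shift $\dim D=i$ versus $i+1$, the vanishing of $\dot\Phi_{w-1,j}$ when $\dot{\mathcal D}_j=\emptyset$, and the fact that $B^\perp\in\dot{\mathcal D}$ with $\dim B^\perp\le k$), all of which are handled correctly.
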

The proofs of the lemma and the following theorem are the same as those of Lemma~\ref{l:ups},
and we omit them.
\begin{theorem}\label{th:N2}
Assume that $w$ is divided by $2^k$ and $k$
is the largest integer with this property.
Let $i \in \{0,\ldots, k\}$.
The total number of different STS$(w-1)$ of $2$-rank $w-i-1$ equals
$$ % \begin{equation}\label{eq:tot3}
 \dot\Gamma_{w,0,i} \sum_{j=i}^k \dot\Gamma_{w,i,j} \mu^{\scriptscriptstyle(2)}_{j-i} \dot\Phi_{w-1,j},
$$ % \end{equation}
where $\mu^{\scriptscriptstyle(2)}_{l}=(-1)^l2^{\binom{l}{2}}$, \
$
\dot\Phi_{w-1,j}=
\Psi_{w/2^j-1}
\Pi_{w/2^j-1}^{2^j-1}
\Lambda_{w/2^j}^{(2^j-1)(2^j-2)/6},
$ \
$\Psi_{u}$ is the number of STS$(u)$ (and also the number of idempotent totally symmetric latin squares of order $u$),\
$\Pi_{u}$ is the number of symmetric latin squares of order $u$ (and also $u!$ times the number of $1$-factorizations of the complete graph of order $u+1$),
$\Lambda_u$ is the number of latin squares of order $u$, and
$$
\dot\Gamma_{w,i,j} =
\left(\frac{w}{2^i}!\right)^{\!\!2^i} \bigg/
2^{\frac{(j-i)(j+i+1)}2}
\left(\frac{w}{2^j}!\right)^{\!\!2^j}
\prod_{s=1}^{j-i}(2^s-1).
 $$
\end{theorem}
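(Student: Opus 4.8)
The plan is to mimic, essentially line for line, the derivation of Theorem~\ref{th:N3} from Lemma~\ref{l:ups} and Lemma~\ref{l:Gamma}, now with $q=2$ in place of $q=3$ and with the $2$-rank versions of all the ingredients: Lemma~\ref{l:2i}, Lemma~\ref{l:Gamma2}, Lemma~\ref{l:STS2Lat}, Corollary~\ref{c:STS2Lat}, and Lemma~\ref{l:ups2}. First I would observe that an STS$(w-1)$ of $2$-rank $w-i-1$ has a dual space of dimension $(w-1)-(w-i-1)=i$ which, being orthogonal to that system, lies in $\dot{\mathcal{D}}_i$; hence the total count splits, according to the choice of this dual space $D$, into a sum over $D\in\dot{\mathcal{D}}_i$ of the number of STS$(w-1)$ whose dual is \emph{exactly} $D$. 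By Lemma~\ref{l:ups2} that inner number equals $\dot\Upsilon_{w-1,i}$, which depends only on $w$ and $i$ and not on $D$, so the total is $|\dot{\mathcal{D}}_i|\cdot\dot\Upsilon_{w-1,i}=\dot\Gamma_{w,0,i}\cdot\dot\Upsilon_{w-1,i}$, the first factor being supplied by Lemma~\ref{l:Gamma2}. Substituting $\mu^{\scriptscriptstyle(2)}_{l}=(-1)^l2^{\binom{l}{2}}$ (Lemma~\ref{l:Mob}) and $\dot\Phi_{w-1,j}$ (Corollary~\ref{c:STS2Lat}), and rewriting $[j-i]_2!=\prod_{s=1}^{j-i}(2^s-1)$, then gives the displayed expression.

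The one step that still needs an argument is Lemma~\ref{l:ups2}, and here I would reproduce the M\"obius inversion of the proof of Lemma~\ref{l:ups}. Fix $D\in\dot{\mathcal{D}}_i$. By Corollary~\ref{c:STS2Lat}, $\dot\Phi_{w-1,j}$ counts the STS$(w-1)$ orthogonal to any given $D'\in\dot{\mathcal{D}}_j$, that is, the systems $B$ with $D'\subseteq B^\perp$; by Lemma~\ref{l:Gamma2}, $\dot\Gamma_{w,i,j}$ counts the $D'\in\dot{\mathcal{D}}_j$ with $D\subseteq D'$. Expanding $\sum_{j=i}^{k}\dot\Gamma_{w,i,j}\,\mu^{\scriptscriptstyle(2)}_{j-i}\,\dot\Phi_{w-1,j}$ as a triple sum over $j$, over $D'\in\dot{\mathcal{D}}_j$ with $D\subseteq D'$, and over STS $B$ with $D'\subseteq B^\perp$, and then interchanging so that $B$ (ranging over STS with $D\subseteq B^\perp$) is outermost, the inner sum becomes $\sum_{D':\,D\subseteq D'\subseteq B^\perp}\mu^{\scriptscriptstyle(2)}_{\dim(D')-\dim(D)}$, where $D'$ runs over the whole interval $[D,B^\perp]$ of the subspace lattice (each such $D'$ is automatically in $\dot{\mathcal{D}}$, being orthogonal to $B$). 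Passing to the quotient $B^\perp/D$ and invoking the defining relation~\eqref{eq:mob} of the M\"obius coefficients, this inner sum is $1$ when $B^\perp=D$ and $0$ otherwise; hence the whole expression counts exactly the STS with dual space $D$, i.e.\ $\dot\Upsilon_{w-1,i}$.

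I do not expect a genuine obstacle here — this is the $q=2$ instance of a computation already carried out for $q=3$ — but three bookkeeping points must be handled with care. (i) Lemma~\ref{l:2i} differs from Lemma~\ref{l:3i} in that the coordinate cell indexed by the all-zero column has one fewer point than the others; this is exactly why $\dot\Gamma_{w,i,j}$ acquires the extra factor $2^{j-i}$ in the denominator relative to a plain multinomial count (cf.\ the simplification of $A$ in the proof of Lemma~\ref{l:Gamma2}), and it is the only place the $2$-rank formula genuinely departs from a word-for-word transcription of the $3$-rank one. (ii) A value of $j$ for which no STS$(w/2^j-1)$ exists (so $\Psi_{w/2^j-1}=0$ and $\dot{\mathcal{D}}_j$ is empty) contributes $0$ to both sides of the inversion — to the left because $\dot\Phi_{w-1,j}=0$, to the right because there is no admissible $D'$ — so the sum may be taken over the full range $j\in\{i,\ldots,k\}$ without harm, the application of Lemma~\ref{l:Gamma2} being vacuous in that case; and if $\dot{\mathcal{D}}_i$ itself is empty the whole count is $0$, again in agreement with the formula. (iii) The interchange-and-quotient step is legitimate only because, by the remark following~\eqref{eq:mob}, the M\"obius coefficient attached to a pair $D\subseteq D'$ depends on $D,D'$ solely through $\dim(D')-\dim(D)$. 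Once these points are checked, the theorem follows.
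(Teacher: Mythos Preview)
Your proposal is correct and follows exactly the approach the paper takes: the paper explicitly states that the proofs of Lemma~\ref{l:ups2} and Theorem~\ref{th:N2} are the same as those of Lemma~\ref{l:ups} (and hence Theorem~\ref{th:N3}) and omits them, and your write-up simply supplies those omitted details faithfully. Your bookkeeping points (i)--(iii) are valid refinements that the paper leaves implicit.
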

\begin{corollary}\label{c:2concrete}
The number of STS$(w-1)$, $w=2^k$, of $2$-rank $w-k$ is
%$$
%2^k!
%(2^{\frac13{2^{2k-3}}-3\cdot 2^{k-2}+k+\frac13} - 1)
%\big/
%{2^{\frac{k(k+1)}2}[k-1]_2!}
%\qquad\mbox{\cite{Tonchev:2001:mass}}.
%$$
$$
w!
(2^{{w^2}/{24}-{3w}/{4}+k+1/3} - 1)
\big/
{2^{\frac{k(k+1)}2}[k-1]_2!}
\qquad\mbox{(see \cite{Tonchev:2001:mass})}.
$$
The number of STS$(w-1)$, $w=2^k$, of $2$-rank $w-k+1$ is
%$$
%\frac{2^k!
%\left(
%3^{\frac13 2^{2k-4}-2^{k-2}+\frac23}\cdot 2^{2^{2k-4}-5\cdot 2^{k-2}+2k-1} -
%3\cdot 2^{\frac13 2^{2k-3}-3\cdot 2^{k-2}+k-\frac23}
%+1
%\right)
%}{3\cdot 2^{\frac{(k+2)(k-1)}2}\cdot [k-2]_2!}%\prod\limits_{s=1}^{k-2}(2^s-1)}
%\qquad\mbox{\cite{ZZ:2013:rem}}.
%$$
$$
\frac{w!
\left(
3^{w^2/48-w/4+2/3}\cdot 2^{w^2/16-5w/4+2k-1} -
3\cdot 2^{ w^2/24-3w/4+k-2/3}
+1
\right)
}{3\cdot 2^{\frac{(k+2)(k-1)}2}\cdot [k-2]_2!}%\prod\limits_{s=1}^{k-2}(2^s-1)}
\qquad\mbox{(see \cite{ZZ:2013:rem})}.
$$
The number of STS$(w-1)$, $w=2^k$, of $2$-rank $w-k+2$ is
$$
\frac{2^k!}{21\cdot 2^{\frac{k(k+1)}2-3 }\cdot [k-3]_2!}$$
$${}\times
       \Big({780^{w/8-1}\cdot(2^{28} \cdot 3^5 \cdot 5^2 \cdot 7^2 \cdot 1361291)^{w^2/384-w/16+1/3} \cdot 2^{3k-12}} \qquad\qquad{}
       $$
       $$
       {}\qquad\qquad\qquad{}- 7\cdot 2^{w^2/16-5w/4+2k-3}
          \cdot 3^{w^2/48-w/4+2/3}
       + 7\cdot2^{w^2/24 -3w/4 - 5/3+k}
       -1 \Big)
\qquad\mbox{(see \cite{Zin:2016:NumSTS})}.
$$
%$${}\times
%       \Big({780^{2^{k-3}-1}\cdot(2^{28} \cdot 3^5 \cdot 5^2 \cdot 7^2 \cdot 1361291)^{\frac{(2^{k-3}-1)(2^{k-3}-2)}6} \cdot 2^{3k-12}} \qquad\qquad{}
%       $$
%       $$
%       {}\qquad\qquad\qquad{}- 7\cdot 2^{2^{k-2}(2^{k-2}-5)+2k-3}
%          \cdot 3^{\frac{2^{2k-4}+2}3 - 2^{k-2}}
%       + 7\cdot2^{\frac{(2^{k-1}-1)(2^{k-2}-4)}{3} +k-3}
%       -1 \Big)
%\qquad\mbox{(see \cite{Zin:2016:NumSTS}\footnotemark)}.
%$$
%%%
%\footnotetext{Compairing our formula with the result of \cite{Zin:2016:NumSTS},
%we found that there is a wrong constant $168$ in the denominator of the formula in \cite[Theorem~3.1]{Zin:2016:NumSTS}.
%The correct constant should be $5040=7!$ (the misprint occurs in a very simple part of calculations and it is easy to understand how to correct it). With the corrected constant, that formula coinsides with ours, which was also checked computationally. We have written to the author and after making clearence about this point, we will edit this referrence.}
%%%
The number of STS$(10w-1)$, $w=2^k$, of $2$-rank $10w-1-k$ is
%$$
% \frac{(10\cdot 2^k)!
% \cdot 122556672^{2^k-1}\cdot(2^{43} \cdot 3^{10} \cdot 5^4 \cdot 7^2 \cdot 31 \cdot 37 \cdot 547135293937)^{\frac{(2^k-1)(2^{k-1}-1)}3}
% }{2^{\frac{k(k+1)}2+5} \cdot 135 \cdot [k]_2!}
%$$
$$
 \frac{(10w)!
 \cdot 122556672^{w-1}\cdot(2^{43} \cdot 3^{10} \cdot 5^4 \cdot 7^2 \cdot 31 \cdot 37 \cdot 547135293937)^{\frac{(w-1)(w-2)}6}
 }{2^{\frac{k(k+1)}2+5} \cdot 135 \cdot [k]_2!}.
$$
\end{corollary}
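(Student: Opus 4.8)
The plan is to specialize Theorem~\ref{th:N2} to the four listed orders and $2$-ranks, plug in the known enumeration data for small Steiner triple systems, symmetric latin squares, and latin squares, and simplify.

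First I would set up the parameters. For $w=2^k$, the integer called $k$ in Theorem~\ref{th:N2} (the largest power of $2$ dividing $w$) is indeed $k$, and the target $2$-rank $w-k+t$ with $t\in\{0,1,2\}$ means $i=k-1-t$, so the inner sum of $\dot\Upsilon_{w-1,i}$ runs over the $t+2$ indices $j=i,i+1,\dots,k$; the group sizes $w/2^{j}$ that occur lie in $\{8,4,2,1\}$, so $\dot\Phi_{w-1,j}$ (via Corollary~\ref{c:STS2Lat}) only needs STS$(7)$, STS$(3)$, STS$(1)$, STS$(0)$, symmetric latin squares of orders $7,3,1,0$, and latin squares of orders $8,4,2,1$. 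For STS$(10w-1)$ with $w=2^k$, I would write the order as $w'-1$ with $w'=10\cdot2^k=2^{k+1}\cdot5$; then the relevant largest power of $2$ is $k+1$, the $2$-rank $10w-1-k=w'-1-k$ gives $i=k$, and the sum is over $j\in\{k,k+1\}$. The one non-computational remark here is that the $j=k+1$ term vanishes: it would involve $\Psi_{w'/2^{k+1}-1}=\Psi_{4}$, which is $0$ because no STS$(4)$ exists; this is why the formula for STS$(10w-1)$ has no alternating terms, unlike the other three.

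Next I would assemble the data. On the Steiner side, $\Psi_0=\Psi_1=\Psi_3=1$, $\Psi_7=30$, $\Psi_9=840$ (\cite{A030128}). For symmetric latin squares, using $\Pi_u=u!\cdot(\text{number of }1\text{-factorizations of }K_{u+1})$ together with \cite{A000438,A036981}: $\Pi_0=\Pi_1=1$, $\Pi_3=6$, $\Pi_7=7!\cdot6240=31449600=8!\cdot780$, and $\Pi_9=9!\cdot1225566720=10!\cdot122556672$. For latin squares (\cite{A002860}): $\Lambda_1=1$, $\Lambda_2=2$, $\Lambda_4=576=2^6\cdot3^2$, $\Lambda_8=108776032459082956800=2^{28}\cdot3^5\cdot5^2\cdot7^2\cdot1361291$, and $\Lambda_{10}=2^{43}\cdot3^{10}\cdot5^4\cdot7^2\cdot31\cdot37\cdot547135293937$.

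Then I would substitute everything into $\dot\Gamma_{w,0,i}\sum_{j}\dot\Gamma_{w,i,j}\,\mu^{\scriptscriptstyle(2)}_{j-i}\,\dot\Phi_{w-1,j}$ and simplify. The factorials telescope in a predictable way: the factor $(w/2^{j}!)^{2^{j}}$ in the denominator of $\dot\Gamma_{w,0,j}$ cancels against the factorial parts carried by $\Psi_{w/2^j-1}$ and by the $2^j-1$ copies of $\Pi_{w/2^j-1}$ (each contributing a factor $(w/2^j)!$), leaving the clean bases $780$ and $122556672$ together with the overall $w!$ (resp.\ $(10w)!$); the prime-power parts of $\Lambda_4,\Lambda_8,\Lambda_{10}$ are raised to the exponent $(2^j-1)(2^j-2)/6$, which counts unordered triples of distinct nonzero columns summing to zero and equals the exponents $w^2/384-w/16+1/3$, $w^2/48-w/4+2/3$, $(w-1)(w-2)/6$ displayed in the statement. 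The only real obstacle — purely bookkeeping — is tracking the residual powers of $2$ and $3$ coming from $\dot\Gamma_{w,0,i}$, from each $\dot\Gamma_{w,i,j}$, from $\mu^{\scriptscriptstyle(2)}_{j-i}=(-1)^{j-i}2^{\binom{j-i}{2}}$, and from the odd and even parts of $\Pi_3,\Pi_7,\Pi_9,\Lambda_4,\Lambda_8,\Lambda_{10}$, and checking that they collapse to exactly the displayed denominators $2^{k(k+1)/2}[k-1]_2!$, $3\cdot2^{(k+2)(k-1)/2}[k-2]_2!$, $21\cdot2^{k(k+1)/2-3}[k-3]_2!$, and $135\cdot2^{k(k+1)/2+5}[k]_2!$ (the factorial factor always originating from $\dot\Gamma_{w,0,i}$), with the signs lining up according to $\mu^{\scriptscriptstyle(2)}_0,\mu^{\scriptscriptstyle(2)}_1,\mu^{\scriptscriptstyle(2)}_2,\mu^{\scriptscriptstyle(2)}_3=1,-1,2,-8$. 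As a built-in consistency test, the first three formulas must reduce to the known results of \cite{Tonchev:2001:mass}, \cite{ZZ:2013:rem}, and \cite{Zin:2016:NumSTS}.
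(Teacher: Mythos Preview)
Your proposal is correct and follows the same route as the paper: specialize Theorem~\ref{th:N2}, substitute the known values of $\Psi_u$, $\Pi_u$, $\Lambda_u$ for small $u$ (the paper lists exactly the same data, including $\Pi_0=1$), and simplify; your observation that in the STS$(10w-1)$ case the top term of the M\"obius sum vanishes because $\Psi_4=0$ is the one structural point worth making explicit, and it is right. The paper's own proof is terser---it simply cites the needed numerical inputs and leaves the algebraic reduction to the reader---so your extra discussion of how the factorials and exponents organize is additional (and harmless) detail rather than a different argument.
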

\begin{proof}
To apply the formula from Theorem~\ref{th:N2}, in addition to the values considered in the proof of Corollary~\ref{c:3concrete},
we need $\Pi_1=1$, $\Pi_3=6$, $\Pi_7=31449600=7!\cdot 6240$ \cite{WalStrWal72}, $\Pi_9=444733651353600=9!\cdot 1225566720$ \cite{GelOd:74}, see also \cite{A036981},
\begin{multline*}
\Lambda_{2}=2,\qquad
\Lambda_{4}= 576,\qquad
 \Lambda_{8}=108776032459082956800=2^{28} \cdot 3^5 \cdot 5^2 \cdot 7^2 \cdot 1361291 \quad\mbox{\cite{Wells:67}},
\\
\Lambda_{10}=9982437658213039871725064756920320000=
2^{43} \cdot 3^{10} \cdot 5^4 \cdot 7^2 \cdot 31 \cdot 37 \cdot 547135293937\quad \mbox{\cite{MK-Rog:10}},
\end{multline*}
see also \cite{A002860}.
We also formally need the trivial values $\Pi_0=1$ and $\Phi_0=1$.
\end{proof}

\begin{remark}
Taking into account Propositions~\ref{p:STS-TSLS} and ~\ref{p:SLS-SLS0}, we know that
$\Psi(u-1)$ and $\Pi(u-1)$ are the numbers of latin squares of order $u$ with certain restrictions.
So, $\Psi(u-1)<\Pi(u-1)<\Lambda(u)$. It can be then noted that if $j\ge 3$,
then the most valuable factor in the formulas for the number of $STS(v)$ of $2$-
or $3$-rank at most $v-j$ is connected with the number of unrestricted latin squares.
\end{remark}

\section{Concluding remarks}\label{s:concl}

As we see from 
the results of \cite{JunTon:18+},
%Lemmas~\ref{l:STS2Lat} and~\ref{l:STS3Lat},
the structure of the Steiner triple systems of deficient rank,
either $2$- or $3$-rank, with fixed orthogonal subspace, is well understood, meaning that it can be described
in terms of latin squares and 
Steiner triple systems of smaller order.
The possibility to derive an explicit formula 
for the number of the non-full-rank STS
(involving the number of latin squares and smaller STS)
implies that this description is constructive even
if we do not fix the orthogonal subspace of the systems.
The following simple statement shows 
that the benefits given by the knowledge of the structure
of a Steiner triple system depending on the value of its $2$-rank 
or the value of its $3$-rank cannot be combined in the same system.

\begin{theorem}\label{th:0}
There is no a Steiner triple system  of order $v$ larger than $3$ that is both non-full-$2$-rank and non-full-$3$-rank,
i.e., of $2$-rank less than $v$ and $3$-rank less than $v-1$.
\end{theorem}
\begin{proof}
Assume that $S$ an STS$(v)$, $v>3$, which is (i) of $3$-rank at most $v-2$ and (ii) $2$-rank at most $v-1$.
By Lemma~\ref{l:3i}, (i) means that there is a vector with $v/3$ zeros, $v/3$ ones, and $v/3$ twos that is orthogonal to $S$ over GF$(3)$;
in particular, $v\equiv 0\bmod 3$.
Assumption (ii) means that $S$ has a Steiner subsystem $S'$ of order $(v-1)/2$, by Lemma~\ref{l:STS2Lat}.
Since  $v>3$ implies $(v-1)/2>v/3$, the system $S'$ is orthogonal over GF$(3)$ to a vector that is not all-$0$, all-$1$, or all-$2$.
By Lemma~\ref{l:3i}, the order $(v-1)/2$ is an integer divisible by $3$, and we get $v\equiv 1\bmod 3$, a contradiction.
\end{proof}

Finally, we briefly discuss the number of isomorphism classes of STS of a prescribed $2$- or $3$-rank,
which can be evaluated utilizing the following observation.
\begin{proposition}[{see \cite[eq. (12)]{JunTon:18+}, where $C$ is the whole space}]\label{p:iso}
The number $N$ of isomorphism classes of STS$(v)$ 
from some family $ \mathcal S$ closed under isomorphism satisfies
$$ \frac {|\mathcal S|}{v!} \le N \le U \frac {|\mathcal S|}{v!} $$
where $U$ is the maximum number of automorphisms of an STS from $ \mathcal S$.
\end{proposition}
So, the formulas given in Theorems~\ref{th:N3} and~\ref{th:N2} for the number 
of STS of a prescribed $3$- or $2$-rank, respectively, immediately
give the lower bounds on the number of isomorphism classes of such systems.
(Note that these lower bounds are asymptotically the same as the bounds in
\cite[Theorems~4.4,\,4.12]{JunTon:18+}; 
the difference is that using Theorems~\ref{th:N3} and~\ref{th:N2},
we more accurately exclude from the counting the systems of smaller rank.)
To obtain an upper bound,
one can substitute the upper bounds on $U$ 
proposed in \cite[Lemma~4.1]{JunTon:18+}
and \cite[Lemma~4.9]{JunTon:18+} for the cases
of $3$-rank and $2$-rank, respectively.
Clearly, the upper bound in Proposition~\ref{p:iso}
is far from the real value, as the most of systems have less than $U$ automorphisms.
Obtaining asymptotically tight formulas for the number of 
STS$(v)$ of rank $i$, for different behavior of $i=i(v)$, 
remains an important problem in this topic.

\section{Acknowledgements}

The authors thank Vladimir Tonchev for the helpful discussion and 
the anonymous reviewers for the helpful comments. 
This research is supported by
the National Natural Science Foundation of China (61672036),
the Excellent Youth Foundation of Natural Science Foundation of Anhui Province (No.1808085J20),
the Academic Fund for Outstanding Talents in Universities (gxbjZD03),
and the 
Program of Fundamental Scientific Research of the SB RAS No I.5.1., project No 0314-2019-0016.

% \bibliographystyle{plain}
% \bibliography{../../k}
% \end{document}

\providecommand\href[2]{#2} \providecommand\url[1]{\href{#1}{#1}}
  \def\DOI#1{{\small {DOI}:
  \href{http://dx.doi.org/#1}{#1}}}\def\DOIURL#1#2{{\small{DOI}:
  \href{http://dx.doi.org/#2}{#1}}}

\end{document}

The following is a PYTHON program with functions 
for calculating the number of STS in accordance with the general formulas 
in (Theorems), special formulas (Corollaries) 
and the results known before (Tonchev, Zinoviev--Zinoviev, and Zinoviev's formulas). 
\verb|N_STS_2rank(V,V-R)| returns the number of STS(V) of 2-rank R;
\verb|N_STS_3rank(V,V-R-1)| returns the number of STS(V) of 3-rank R.
\begin{verbatim}
from math import factorial
def prod(I): return I[-1]*prod(I[:-1]) if I else 1

# http://oeis.org/A002860 Number of Latin squares
N_LS=[1, 1, 2, 12, 576, 161280, 812851200, 61479419904000, 108776032459082956800, 2**35*3**8*5**2*7**2*5231*3824477, 2**43*3**10*5**4*7**2*31*37*547135293937, 2**51*3**12*5**5*7**2*11*2801*2206499*62368028479]
# http://oeis.org/A036981
N_SLS=[1,1,0,6,0,720,0,31449600,0,444733651353600,0,10070314878246926155776000,0,614972203951464612786852376432607232000]
# http://oeis.org/A030128 Number of Steiner triple systems
N_STS=[1, 1, 0, 1, 0, 0, 0, 30, 0, 840, 0, 0, 0, 1197504000, 0, 60281712691200, 0, 0, 0, 1348410350618155344199680000]

def mu(x,q=2): return (-1)**x * q**(x*(x-1)/2)


def q_factorial(n,q):
    return prod([sum([q**i for i in range(s+1)]) for s in range(n)])
    # return prod([q**(s+1)-1 for s in range(n)])/(q-1)**n # for q>1 only

def Gamma(w,i,j,q=2):
    return factorial(w/q**i)**(q**i) / ( factorial(w/q**j)**(q**j) * q**((j-i)*(j+i+1)/2) * (q-1)**(j-i) * q_factorial(j-i,q) )

def Phi(v,j):
    w=v+1
    return N_STS[w/2**j-1] \
          * N_SLS[w/2**j-1]**(2**j-1) \
           * N_LS[w/2**j]**((2**j-1)*(2**j-2)/6)

def Phi3(v,j):
    return N_STS[v/3**j]**(3**j) \
           * N_LS[v/3**j]**((3**j)*(3**j-1)/6)

def N_STS_2rank(v,i): return sum(N_STS_2rank_(v,i))

def N_STS_2rank_(v,i): # v-i - rank
    N=[]; j=i
    while (v+1) % 2**j == 0:
        N+=[Gamma(v+1,0,i) * Gamma(v+1,i,j)*mu(j-i)*Phi(v,j)]; j+=1
    return N

def N_STS_3rank(v,i): return sum(N_STS_3rank_(v,i))

def N_STS_3rank_(v,i): # v-i-1 -- rank
    N=[]; j=i
    while v % 3**j == 0:
        N+=[Gamma(v,0,i,3)*Gamma(v,i,j,3)*mu(j-i,3)*Phi3(v,j)]; j+=1
    return N

#==============================================================================
def Tonchev1(k): # [19]
    return factorial(2**k-1)*(2**((2**(k-1)-1)*(2**(k-2)-1)/3)-2**(2**(k-1)-k)) / 2**(2**(k-1)-1)/prod([2**(k-1)-2**i for i in range(k-1)])

def Zinoviev2(k): # [28]
    v=2**k-1; u=2**(k-2)-1
    Mv2=6**u * ( (2**6*3**2)**(u*(u-1)/6) - 2**(u-k+2)*3*(16**(u*(u-1)/6)-2**(u-k+2)) -2**(2*u-2*k+4) )
    return Mv2 * factorial(v) / 24**u / 6 / prod([u+1-2**i for i in range(k-2)])

def Zinoviev3_(m): # wrong formula in [26]
    v=2**m-1; u=2**(m-3)-1; k=u*(u-1)/6
    Mv3=30*(factorial(7)*6240)**u*108776032459082956800**k \
        - 210 * 5040**u * (24 * 6)**u * 576**(4*k) * 2**(u-m+3) \
        +  30 * 5040**u * (( 14 * 8**u * 16**(4*k) - 8 * 2**(u-m+3) )) * 2**(2*(u-m+3))
    return Mv3 * factorial(v) / ( factorial(8)**u * 168 * prod([u+1-2**i for i in range(m-3)]) )

def Zinoviev3(m): # corrected formula in https://arxiv.org/abs/1512.00187v2
    v=2**m-1; u=2**(m-3)-1; k=u*(u-1)/6
    Mv3=30*(factorial(7)*6240)**u*108776032459082956800**k \
        - 210 * 5040**u * (24 * 6)**u * 576**(4*k) * 2**(u-m+3) \
        +  30 * 5040**u * (( 14 * 8**u * 16**(4*k) - 8 * 2**(u-m+3) )) * 2**(2*(u-m+3))
    return Mv3 * factorial(v) / ( factorial(8)**u * factorial(7) * prod([u+1-2**i for i in range(m-3)]) )

def SXK3_0(k):
    return factorial(3**k)/2**k/3**(k*(k+1)/2)/q_factorial(k,3)

def SXK3_1(k):
    v=3**k
    return factorial(v)*(2**(v**2/27-4*v/9+1)*3**(v**2/54-7*v/18+k)-1) / \
          (( 2**k * 3**(k*(k+1)/2) * q_factorial(k-1,3) ))
#    return factorial(3**k)*(2**(1+3**(k-2)*(3**(k-1)-4))*3**(k+(3**(k-2)*(3**(k-1)-7))/2)-1)/2**k/3**(k*(k+1)/2)/q_factorial(k-1,3)

def SXK3_2(k):
    v=3**k
    O=factorial(v)/q_factorial(k-2,3)/2**(k+2)/3**(k*(k+1)/2-1)
    A=5524751496156892842531225600**(3**(k-2)*(3**(k-2)-1)/6) / 2**(4*3**(k-2)-4) / 3**(3**(k-1)-2*k+2)
    A=(2**35*3**8*5**2*7**2*5231*3824477)**(v*(v-9)/486) / (( 2**(4*v/9-4) * 3**(v/3-2*k+2) ))
    B=2**(3**(k-2)*(3**(k-1)-4)+3) * 3**(3**(k-2)*(3**(k-1)-7)/2+k-1)
    B=2**(v**2/27-4*v/9+3) * 3**(v**2/54-7*v/18+k-1)
    C=1
    return sum([O*A,-O*B,O*C])

def SXK3_7k(k):
    return factorial(7*3**k)*61479419904000**(3**k*(3**k-1)/6) / (( 2**k * 3**(k*(k+1)/2) * q_factorial(k,3) * 168**(3**k) ))

def SXK1(k):
    w=2**k
    return factorial(w) * (2**((w**2-18*w+8)/24+k)-1) / 2**(k*(k+1)/2) / q_factorial(k-1,2)

def SXK2(k):
    w=2**k
    O = factorial(w) / ( 3 * 2**((k-1)*(k+2)/2) * q_factorial(k-2,2) )
    return sum([ O* 3**((w**2-12*w+32)/48) * 2**(w**2/16-5*w/4+2*k-1),  -O* 3*2**( (w**2-18*w-16)/24+k) ,O ])

def SXK3(k):
    w=2**k
    O=factorial(2**k) / ( 21 * 2**(k*(k+1)/2-3) * q_factorial(k-3,2) )
    O=factorial(w) / ( 21 * 2**(k*(k+1)/2-3) * q_factorial(k-3,2) )
    D=780**(2**(k-3)-1) * 108776032459082956800**( (2**(k-3)-1)*(2**(k-3)-2)/6 ) * 2**(3*k-12)
    D=780**(w/8-1) * (2**28*3**5*5**2*7**2*1361291)**( (w**2+128)/384-w/16 ) * 2**(3*k-12)
    E=7*2**(w**2/16-5*w/4+2*k-3) \
       * 3**((w**2-12*w+32)/48)
    F=7*2**((w**2-18*w-40)/24+k)
    G= 1
    return sum([O*D,-O*E,O*F,-O*G])

def SXK10k(k):
    w=2**k
    return factorial(10*w)*122556672**(w-1)*(2**43*3**10*5**4*7**2*31*37*547135293937)**((w-1)*(w-2)/6) \
         / 2**(k*(k+1)/2+5) / 135 / q_factorial(k,2)



for k in [3,4,5,6,7,8]: 
  print ("2-rank=v-k+3; k="+str(k)+": "+str(N_STS_2rank(2**k-1,k-3)==SXK3(k))+str(SXK3(k)==Zinoviev3_(k))+str(SXK3(k)==Zinoviev3(k)))
  if k<7: 
    print (N_STS_2rank(2**k-1,k-3))
    print (SXK3(k))
    print (Zinoviev3_(k))
    print (Zinoviev3(k))

for k in [3,4,5,6,7,8]: 
  print ("2-rank=v-k+2; k="+str(k)+": "+str(N_STS_2rank(2**k-1,k-2)==SXK2(k))+str(SXK2(k)==Zinoviev2(k)))
  if k<7: print (N_STS_2rank(2**k-1,k-2))
  if k<7: print (SXK2(k))
  if k<7: print (Zinoviev2(k))

for k in [3,4,5,6,7,8]: 
  print ("2-rank=v-k+1; k="+str(k)+": "+str(N_STS_2rank(2**k-1,k-1)==SXK1(k))+str(SXK1(k)==Tonchev1(k)))
  if k<7: print (N_STS_2rank(2**k-1,k-1))
  if k<7: print (SXK1(k))
  if k<7: print (Tonchev1(k))

for k in [3,4,5,6,7,8]: 
  print ("3-rank=v-1-k+1; k="+str(k)+": "+str(N_STS_3rank(3**k,k-1)==SXK3_1(k)))
  if k<5: print (N_STS_3rank(3**k,k-1))
  
for k in [3,4,5,6,7]: 
  print ("3-rank=v-1-k+2; k="+str(k)+": "+str(N_STS_3rank(3**k,k-2)==SXK3_2(k)))
  if k<5: print (N_STS_3rank(3**k,k-1))

for k in [2,3,4,5,6,7]:
    print ("2-rank=...; k="+str(k)+": "+str(N_STS_2rank(10*2**k-1,k)==SXK10k(k)))
    if k<4: print (N_STS_2rank(10*2**k-1,k))
\end{verbatim}

\end{document}